\documentclass[a4paper,twoside]{article}
\usepackage{a4}
\usepackage{amssymb}
\usepackage{amsmath}
\usepackage{upref}
\usepackage[active]{srcltx}
\usepackage[colorlinks,citecolor=blue,linkcolor=blue]{hyperref}
\usepackage[dvipsnames]{color}
\allowdisplaybreaks[2] % To make it possible for displaybreaks
%
%       Here we fix so that the date and time are shown on
%       the pages.
%
%       First we calculate the time
%
\newcount\minutes \newcount\hours
\hours=\time \divide\hours 60 \minutes=\hours
 \multiply\minutes-60
\advance\minutes \time
\newcommand{\klockan}{\the\hours:{\ifnum\minutes<10 0\fi}\the\minutes}
\newcommand{\tid}{\today\ \klockan}
\newcommand{\prtid}{\smash{\raise 10mm \hbox{\LaTeX ed \tid}}}
\renewcommand{\prtid}{}
%
%       Here the headings, with time is fixed.
%       Changed to give article headings, as in a journal
%       One should have a \markboth in the begining of the document
%
\makeatletter \pagestyle{headings} \headheight 10pt
\def\sectionmark#1{} %\markboth{{\sectnr #1}}{{\sectnr #1}}} %Journal
\def\subsectionmark#1{}
\newcommand{\sectnr}{\ifnum \c@secnumdepth >\z@
                 \thesection.\hskip 1em\relax \fi}
\def\@evenhead{\footnotesize\rm\thepage\hfil\leftmark\hfil\llap{\prtid}}
\def\@oddhead{\footnotesize\rm\rlap{\prtid}\hfil\rightmark\hfil\thepage}
\def\tableofcontents{\section*{Contents} %\@mkboth{Contents}{Contents}} %Journal
 \@starttoc{toc}}
\makeatother
%
%       Here we change the behaviour of the reference list
%       we want ref like 1. instead of [1].
%
\makeatletter
\def\@biblabel#1{#1.}
\makeatother
%
%       Here we remove the setting of headlines by thebibliography,
%       so that it remains author/title.
%
%       We also change the vertical spacing.
%
\makeatletter
\let\Thebibliography=\thebibliography
\renewcommand{\thebibliography}[1]{\def\@mkboth##1##2{}\Thebibliography{#1}
\addcontentsline{toc}{section}{References}
\frenchspacing % Maybe not needed
% Deleting extra vertical space
\setlength{\@topsep}{0pt}% Delete if extra space before list
\setlength{\itemsep}{0pt}%
\setlength{\parskip}{0pt plus 2pt}%
} \makeatother
%
%       Here we change the behaviour of \ldots and \cdots to be
%       the same as in Acta Mathematica
%
\makeatletter
\def\mdots@{\mathinner.\nonscript\!.%
 \ifx\next,.\else\ifx\next;.\else\ifx\next..\else
 \nonscript\!\mathinner.\fi\fi\fi}
\let\ldots\mdots@
\let\cdots\mdots@
\let\dotso\mdots@
\let\dotsb\mdots@
\let\dotsm\mdots@
\let\dotsc\mdots@
\def\vdots{\vbox{\baselineskip2.8\p@ \lineskiplimit\z@
    \kern6\p@\hbox{.}\hbox{.}\hbox{.}\kern3\p@}}
\def\ddots{\mathinner{\mkern1mu\raise8.6\p@\vbox{\kern7\p@\hbox{.}}%
    \raise5.8\p@\hbox{.}\raise3\p@\hbox{.}\mkern1mu}}
\makeatother
%
%      Here we change the vertical spacing and labels of the enumerate environment
%
\makeatletter
\let\Enumerate=\enumerate
\renewcommand{\enumerate}{\Enumerate%
% Deleting extra vertical space
\setlength{\@topsep}{0pt}% Delete if extra space before list
\setlength{\itemsep}{0pt}%
\setlength{\parskip}{0pt plus 1pt}%
\renewcommand{\theenumi}{\textup{(\alph{enumi})}}%
\renewcommand{\labelenumi}{\theenumi}%
}
\let\endEnumerate=\endenumerate
\renewcommand{\endenumerate}{\endEnumerate\unskip}
\makeatother
%
%       Here we add points to sections etc.
%       That is, 'Section 2.3.' instead of only 'Section 2.3'
%
\makeatletter
\def\@seccntformat#1{\csname the#1\endcsname.\quad}
\makeatother
%
%       Definition to enter the title and author once
%       if the title in headings is different from the original title
%       use authortitletitle
%
\newcommand{\authortitle}[2]{\author{#1}\title{#2}\markboth{#1}{#2}}
%
%       Reference definitions
%
\newcommand{\art}[6]{{\sc #1, \rm #2, \it #3 \bf #4 \rm (#5), \mbox{#6}.}}
\newcommand{\auth}[2]{{#1, #2.}}
\newcommand{\idxauth}[2]{{#1, #2.}}
\newcommand{\artin}[3]{{\sc #1, \rm #2,  in #3.}}
\newcommand{\artprep}[3]{{\sc #1, \rm #2, #3.}}
\newcommand{\arttoappear}[3]{{\sc #1, \rm #2, to appear in \it #3}}
\newcommand{\book}[3]{{\sc #1, \it #2, \rm #3.}}
\newcommand{\AND}{{\rm and }}
%
%       Theorems and other things numbered
%
\RequirePackage{amsthm}
\newtheoremstyle{descriptive}%
  {\topsep}   %{\medskipamount}          % Space above
  {\topsep}   %  {\medskipamount}          % Space below
  {\rmfamily} % Body font
  {}          % Indent
  {\bfseries} % Head font
  {.}         % Punctuation after thm head
  { }         % Space after thm head
  {}          % Thm head spec(?)
\newtheoremstyle{propositional}%
  {\topsep}   %  {\medskipamount}          % Space above
  {\topsep}   %  {\medskipamount}          % Space below
%{}{}
  {\itshape}  % Body font
  {}          % Indent
  {\bfseries} % Head font
  {.}         % Punctuation after thm head
  { }         % Space after thm head
  {}          % Thm head spec(?)
\theoremstyle{propositional}
\newtheorem{thm}{Theorem}[section]
\newtheorem{prop}[thm]{Proposition}
\newtheorem{lemma}[thm]{Lemma} % As used by Niko
\newtheorem{cor}[thm]{Corollary}
\theoremstyle{descriptive}
\newtheorem{deff}[thm]{Definition}
\newtheorem{example}[thm]{Example}
\newtheorem{remark}[thm]{Remark}
\newtheorem{problem}[thm]{Open problem}
%
% Here we redfine the proof environment to get the same spacing
% before and after it as before and after theorems.
%
% Based on amsthm.sty 2.20, 2004/08/06
%
% By Anders Bj\"orn, 2004-03-18
%
% Recall that you can get a \qed werever you want using \qedhere.
%
\makeatletter
\renewenvironment{proof}[1][\proofname]{\par
  \pushQED{\qed}%
  \normalfont
%\topsep6\p@\@plus6\p@\relax % Removed by Anders Bj\"orn
  \trivlist
  \item[\hskip\labelsep
        \itshape
    #1\@addpunct{.}]\ignorespaces
}{%
  \popQED\endtrivlist\@endpefalse
} \makeatother
%
%       Special defined symbols
%
\newcommand{\setm}{\setminus}
\renewcommand{\emptyset}{\varnothing}
%
% \vint barred integral
% Definition submitted by Tero Kilpel\"ainen and Pekka Koskela
% with articles for Arkiv f\"or matematik 37:2 (1999).
%
% Optimized for Knuth's Computer Modern fonts 10-12pt,
% but should work well in most situations (with a symmetric
% integral sign). For some reason it needs to be slightly asymmetric
% in displaystyle (hence 0.22 instead of 0.2).
%
% \vintslides is provided for use with the slides style,
% where a thicker rule is needed.
%
% Modified in various ways by Anders Bj\"orn since 2000.
% Major modification by Anders Bj\"orn, 28 April 2009.
%
\def\vint{\mathop{\mathchoice%
          {\setbox0\hbox{$\displaystyle\intop$}\kern 0.22\wd0%
           \vcenter{\hrule width 0.6\wd0}\kern -0.82\wd0}%
          {\setbox0\hbox{$\textstyle\intop$}\kern 0.2\wd0%
           \vcenter{\hrule width 0.6\wd0}\kern -0.8\wd0}%
          {\setbox0\hbox{$\scriptstyle\intop$}\kern 0.2\wd0%
           \vcenter{\hrule width 0.6\wd0}\kern -0.8\wd0}%
          {\setbox0\hbox{$\scriptscriptstyle\intop$}\kern 0.2\wd0%
           \vcenter{\hrule width 0.6\wd0}\kern -0.8\wd0}}%
          \mathopen{}\int}
%
%
%
%       log-like symbols and standard notation
%
\newcommand{\Cp}{{C_p}}
\newcommand{\CpY}{{C_p^Y}}
\newcommand{\CpU}{{C_p^U}}
\DeclareMathOperator{\diam}{diam} 
\DeclareMathOperator{\dist}{dist}
\DeclareMathOperator{\Lip}{Lip}
\DeclareMathOperator{\spt}{supp}
\newcommand{\supp}{\spt}
\newcommand{\loc}{_{\rm loc}}
%
%       Greek alphabet and abreviations
%
\newcommand{\al}{\alpha}
\newcommand{\alp}{\alpha}
\newcommand{\dmu}{d\mu}
\newcommand{\de}{\delta}
\newcommand{\eps}{\varepsilon}
\newcommand{\ga}{\gamma}
\newcommand{\Om}{\Omega}
\renewcommand{\phi}{\varphi}
% \p for better spacing in constructions like p-something
\newcommand{\p}{{$p\mspace{1mu}$}}
\newcommand{\R}{\mathbf{R}}
\newcommand{\Q}{\mathbf{Q}}
\newcommand{\eR}{{\overline{\R}}}
%
%	\limplus, \limminus, \limpm and \limmp to be used e.g in 
%        r \to 1- and f_+
%
%	I think that r \to 1- has a far to big - sign normally.
%	This definition makes it smaller and adjust the height.
%
%       The definition requires the package graphicx
%       xdvi displays the scriptscriptversions too large
%
%       The definition doesn't work well in 5pt or 6pt, but in 8pt
%       and above it looks fine.
%
%	By Anders Bj\"orn 1995-02-24, 2006-03-23 and 2014-01-10 
%          (after suggestions by Tommy Ekola and Mats Aigner)
%
%       The definition here is modifed to avoid the use of graphicx,
%       and thus doesn't work so well in scriptscriptstyle, but when
%       this isn't used there is no need for that.
%
%%%%%%%%%%%%%%%%%%%%%%%%%%%%%%%%%%%%%%%%%%%%%%%%%%%%%%%%%%%%%%%%%%%%%%%%%%%%
\newcommand{\limplus}{{\mathchoice{\vcenter{\hbox{$\scriptstyle +$}}}
  {\vcenter{\hbox{$\scriptstyle +$}}}
  {\vcenter{\hbox{$\scriptscriptstyle +$}}}
  {\vcenter{\hbox{$\scriptscriptstyle +$}}}
}}
%
%       Spaces
%
\newcommand{\Np}{N^{1,p}}
\newcommand{\Nploc}{N^{1,p}\loc}
%
%    p-harmonic functions
%
\newcommand{\gat}{{\tilde{\ga}}}
\newcommand{\Gt}{\widetilde{G}}
\newcommand{\Omt}{\widetilde{\Om}}
\newcommand{\ut}{\tilde{u}}
%
% Here we make the equation numbering to be within sections
%
\numberwithin{equation}{section}
\newcommand{\U}{\mathcal{U}}
\newcommand{\YY}{\mathcal{Y}}
\newcommand{\Ll}{\mathcal{L}}
\newcommand{\Ga}{\Gamma}
\newcommand{\Lp}{L^p}
%
%  General norms
%
\renewcommand{\P}{\mathcal{P}}
\newcommand{\eqv}{\ensuremath{
\mathchoice{\quad \Longleftrightarrow \quad}{\Leftrightarrow}
                {\Leftrightarrow}{\Leftrightarrow}} }
\newcommand{\imp}{\ensuremath{\Rightarrow} }

\newenvironment{ack}{\medskip{\it Acknowledgement.}}{}

\begin{document}

\authortitle{Anders Bj\"orn, Jana Bj\"orn and Jan Mal\'y}
{Quasiopen and \p-path open sets, and characterizations of quasicontinuity}
\author
{Anders Bj\"orn \\
\it\small Department of Mathematics, Link\"oping University, \\
\it\small SE-581 83 Link\"oping, Sweden\/{\rm ;}
\it \small anders.bjorn@liu.se
\\
\\
Jana Bj\"orn \\
\it\small Department of Mathematics, Link\"oping University, \\
\it\small SE-581 83 Link\"oping, Sweden\/{\rm ;}
\it \small jana.bjorn@liu.se
\\
\\
Jan Mal\'y \\
\it\small Department of Mathematics, Faculty of Science, 
     J. E. Purkyn\v e University,\\
\it\small \v Cesk\'e ml\'ade\v{z}e 8, 
CZ-400 96 \'Ust\'{\i} nad Labem, 
Czech Republic\/{\rm ;} \\
\it \small maly@karlin.mff.cuni.cz
}

\date{}

\maketitle

\noindent{\small
 {\bf Abstract}.
In this paper we give various characterizations
of quasiopen sets and quasicontinuous functions on 
metric spaces.
For complete metric spaces equipped with a doubling measure 
supporting a \p-Poincar\'e inequality
we show that quasiopen and \p-path open sets coincide. 
Under the same assumptions we show that all Newton-Sobolev 
functions on quasiopen sets are quasicontinuous.
}

\bigskip

\noindent {\small \emph{Key words and phrases}:
Analytic set,
characterization,
doubling measure, 
fine potential theory,
metric space, 
Newtonian space,
nonlinear potential theory,
Poincar\'e inequality, 
\p-path open,
quasicontinuous, quasiopen, 
Sobolev space,
Suslin set.
}

\medskip

\noindent {\small Mathematics Subject Classification (2010):
Primary: 31E05; Secondary: 28A05, 30L99, 31C15, 31C40, 
31C45, 46E35.
}

\section{Introduction}

When studying Sobolev spaces and 
potential theory on an open subset $\Om$  
of $\R^n$ (or of a metric space),
there are two natural Sobolev capacities one can consider.
One defined using the global Sobolev norm 
and the other one using the Sobolev norm on $\Om$.
When $\Om$ is open, these two capacities are easily shown 
to have the same zero sets.
We shall show that the same holds also if $\Om$ is only quasiopen,
i.e.\ open up to sets of arbitrarily small capacity,
see Proposition~\ref{prop-CpU} for the exact details.

To consider Sobolev spaces and capacities on nonopen
sets 
is natural e.g.\ in fine potential theory.
Such studies were pursued on quasiopen sets in $\R^n$ by
Kilpel\"ainen--Mal\'y~\cite{KiMa92},
Latvala~\cite{LatPhD} and Mal\'y--Ziemer~\cite{MZ}.
In the last two decades, 
several types of Sobolev spaces have been introduced on 
general metric spaces by e.g.\ Cheeger~\cite{Cheeger}, Haj\l asz~\cite{Haj03} 
and Shanmugalingam~\cite{Sh-rev}.
Using this 
approach one just regards 
subsets as metric spaces in their
own right, with the metric and the measure 
inherited from the underlying space.
This makes it possible to define Sobolev type spaces on 
even more general subsets.

We shall use the Newtonian Sobolev spaces, which 
on open subsets of $\R^n$  are known
to coincide with the usual Sobolev spaces,
see Theorem~4.5 in Shanmugalingam~\cite{Sh-rev} and 
Theorem~7.13 in Haj\l asz~\cite{Haj03}. 
This equivalence is true also for open subsets 
of weighted $\R^n$ with \p-admissible weights, $p>1$, see
Propositions~A.12 and~A.13 in \cite{BBbook}.
See also Heinonen--Koskela--Shammugalingam--Tyson~\cite{HKST} for more
on Newtonian spaces.

It is well known that every equivalence class of the classical Sobolev spaces
contains better-than-usual, so-called \emph{quasicontinuous}, representatives.
In metric spaces this is only known to hold under certain
assumptions.
Roughly speaking, quasicontinuity means continuity outside sets
of arbitrarily small capacity, see Definition~\ref{def-qcont}.
The Sobolev capacity plays a central role when defining 
quasicontinuity, and there are
actually \emph{two types} of quasicontinuity that one can consider on $\Om$,
one for each of the two capacities mentioned above.
As far as we know this subtle distinction has not been discussed
in the literature.
It is not difficult to show that these two notions of quasicontinuity
are equivalent if $\Om$ is open, and examples show that
for general sets this is not true.

We shall show in Proposition~\ref{prop-qcont-char}
that the equivalence holds for functions defined on
quasiopen sets.
The proof is  
more involved than for open sets, but 
still rather elementary, and it holds in 
arbitrary metric spaces (only assuming 
that balls  have finite measure).
In Proposition~\ref{prop-quasiopen-char}
we obtain
a similar equivalence
for two notions of quasiopenness.
These two results (and also 
Proposition~\ref{prop-CpU}) complement
the restriction result from  
Bj\"orn--Bj\"orn~\cite[Proposition~3.5]{BBnonopen}, stating that if 
$U\subset X$ is \p-path open and measurable, 
then the minimal \p-weak upper gradients 
with respect to $X$ and $U$ coincide   in~$U$.
All these results show the equivalence between a global property
and the corresponding property 
localized to a quasiopen or
\p-path open set.

It was shown
by Shanmugalingam~\cite[Remark~3.5]{Sh-harm} that quasiopen sets in arbitrary
metric spaces are \p-\emph{path open},
i.e.\ that \p-almost every rectifiable curve meets such a set in 
a relatively open 1-dimensional set.
We shall show that under the usual assumptions on the metric space
(and in particular in $\R^n$),
the converse implication is true as well.
More precisely, we prove the following result.

\begin{thm}  \label{thm-p-path-quasiopen}
Assume that the metric space $X$ equipped with a doubling measure 
$\mu$ is complete and supports  a \p-Poincar\'e inequality.
Then every \p-path open set in $X$ is quasiopen\/
\textup{(}and in particular measurable\/\textup{)}.
\end{thm}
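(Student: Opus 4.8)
The plan is to exploit the known inclusion (quasiopen $\Rightarrow$ \p-path open) in reverse by showing that a \p-path open set $U$ can be exhausted, up to a set of arbitrarily small $\Cp$-capacity, by open sets. The natural way to produce such open sets under a doubling/Poincar\'e assumption is to use the quasicontinuity of Newtonian functions together with the characterization of \p-path openness in terms of curves. Concretely, I would first show that $U$ is measurable: since a \p-path open set meets \p-a.e.\ curve in a relatively open set, its complement $X\setm U$ meets \p-a.e.\ curve in a relatively closed set, and one can try to capture $U$ between Borel (indeed Suslin/analytic) sets using the curve family; alternatively, if earlier parts of the paper already establish measurability of \p-path open sets under these hypotheses, I would simply cite that. (The excerpt's abstract and the parenthetical "and in particular measurable" suggest measurability is obtained as a by-product, so I expect the cleanest route is to derive it simultaneously with quasiopenness.)

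The core step is to build a Newtonian-type function that is continuous precisely where $U$ looks open. I would consider the function $u = \operatorname{dist}(\cdot, X\setm U)$, or rather a truncated/localized version, and argue that on a complete doubling \p-Poincar\'e space the distance function to the complement is in $N^{1,p}\loc$ with upper gradient bounded by $1$. Then, invoking quasicontinuity of Newtonian functions (which holds under exactly these assumptions — this is the standard Cheeger/Shanmugalingam quasicontinuity theorem, and is precisely the kind of input the paper is set up to use), there is an open set $G$ of small $\Cp$-capacity such that $u|_{X\setm G}$ is continuous. The key claim is then that $U\setm G$ is relatively open in $X\setm G$: this is where \p-path openness enters — one shows that if $x\in U$ then \p-a.e.\ curve through $x$ stays in $U$ for a positive initial segment, which forces $u>0$ on a neighborhood of $x$ modulo a capacity-null set, and combined with the continuity of $u$ off $G$ this upgrades to genuine relative openness. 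Thus $U = (X\setm G)\cap\{u>0\}\cup(U\cap G)$ exhibits $U$ as open up to the small set $G$, i.e.\ quasiopen.

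The main obstacle I anticipate is precisely the passage from "\p-a.e.\ curve through $x$ enters $U$" to "a neighborhood of $x$ (minus a small set) lies in $U$". \p-path openness is a statement about individual curves, and curves of zero \p-modulus can conspire to make $U$ badly behaved at isolated-looking points; the Poincar\'e inequality is what rules this out, by guaranteeing that there are enough curves (positive modulus) joining $x$ to any nearby ball, so that if $x$ had no neighborhood inside $U\cup(\text{small set})$ one could find a positive-modulus family of curves violating \p-path openness. Making this quantitative — choosing the radii, estimating the modulus of the relevant curve family from below via the Poincar\'e inequality and doubling, and bounding the exceptional capacity — is the technical heart. A secondary subtlety is that one must work with the correct capacity $\Cp$ (the global one) when invoking quasicontinuity, and keep track that the exceptional set $G$ can be taken open; here I would lean on the basic properties of $\Cp$ and, if needed, on Proposition~\ref{prop-qcont-char} / Proposition~\ref{prop-quasiopen-char} to move between the global and localized formulations.
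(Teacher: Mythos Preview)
Your overall strategy---construct a function $u\in\Np(X)$ whose positivity set is exactly $U$, invoke quasicontinuity, and read off that $U=\{u>0\}$ is quasiopen---matches the paper's. But your choice of function, $u=\dist(\,\cdot\,,X\setm U)$, does not work, and this is a genuine gap rather than a technicality. The distance function is $1$-Lipschitz, hence already \emph{continuous everywhere}; quasicontinuity gives you nothing beyond that, so the exceptional set $G$ can be taken empty. Then $\{u>0\}$ is simply the \emph{topological interior} of $U$, which for a general \p-path open set is strictly smaller than $U$. In other words, there can be points $x\in U$ with $\dist(x,X\setm U)=0$---that is precisely what distinguishes a \p-path open set from an open one---and your argument does not recover them. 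Your proposed rescue (``\p-a.e.\ curve through $x$ stays in $U$ initially, which forces $u>0$ near $x$ modulo a null set'') does not go through for this $u$: the value $\dist(x,X\setm U)$ is a purely metric quantity and is completely insensitive to which curves are exceptional.

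The paper's key idea is to build the modulus witness into the function. Since $U$ is \p-path open, the family $\Ga$ of curves $\ga$ for which $\ga^{-1}(U)$ is not relatively open has zero \p-modulus, so there is $\rho\in L^p(X)$ with $\int_\ga\rho\,ds=\infty$ for every $\ga\in\Ga$. For bounded $U\subset B$ one then sets
\[
u(x)=\min\biggl\{1,\ \inf_{\ga}\int_\ga(\rho+\chi_B)\,ds\biggr\},
\]
the infimum over rectifiable curves from $x$ to $X\setm U$. This $u$ has $\rho+\chi_B$ as an upper gradient; its measurability (which is where doubling and Poincar\'e enter, via the $\mathrm{MEC}_p$-type result of J\"arvenp\"a\"a--J\"arvenp\"a\"a--Rogovin--Rogovin--Shanmugalingam) then puts it in $\Np(X)$. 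The crucial pointwise claim $u>0$ on $U$ is proved by a concatenation argument: if $u(x)=0$ one finds curves $\ga_j$ from $x$ to $X\setm U$ with $\int_{\ga_j}(\rho+\chi_B)\,ds\le 2^{-j}$, and splicing them together (back and forth) produces a single rectifiable curve $\gat$ with $\int_{\gat}\rho\,ds<\infty$ for which $\gat^{-1}(U)$ fails to be open at the endpoint, contradicting $\gat\notin\Ga$. This is the step your proposal is missing; the Poincar\'e inequality is not used here at all---it is used only (indirectly) to secure measurability of $u$ and quasicontinuity of Newtonian functions.
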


Under the same assumptions it was recently
shown by Bj\"orn--Bj\"orn--Latvala~\cite[Theorem~1.4]{BBLat2}
that a set is quasiopen if and only if it is a union of a finely open
set and a set with zero capacity, generalizing a similar result from $\R^n$,
see Adams--Lewis~\cite[Proposition~3]{AdLew}.
Thus we now have two characterizations of quasiopen sets.

As a consequence of Theorem~\ref{thm-p-path-quasiopen}
we obtain the following characterization of quasicontinuous functions.

\begin{thm} \label{thm-qcont=>composition-intro} 
Assume that the metric space $X$ equipped with a doubling measure 
$\mu$ is complete and supports  a \p-Poincar\'e inequality.
Let $U \subset X$ be quasiopen.

Then $u:U \to [-\infty,\infty]$
is quasicontinuous 
if and only if
it is measurable and finite q.e., and 
$u\circ\ga$ is continuous for \p-a.e.\ curve $\ga:[0,l_\ga]\to U$.
\end{thm}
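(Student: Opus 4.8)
The plan is to deduce Theorem~\ref{thm-qcont=>composition-intro} from Theorem~\ref{thm-p-path-quasiopen} together with the (more elementary) characterizations already announced in the introduction, namely Proposition~\ref{prop-qcont-char} on the equivalence of the two notions of quasicontinuity on a quasiopen set, and the fact that quasiopen sets are \p-path open (Shanmugalingam~\cite[Remark~3.5]{Sh-harm}). The two implications are of quite different character, so I would treat them separately.

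For the forward direction, suppose $u$ is quasicontinuous on $U$. Then $u$ is automatically measurable, and since the Sobolev capacity does not charge sets of zero capacity, removing an open set of small capacity shows $u$ is finite q.e. To get that $u\circ\ga$ is continuous for \p-a.e.\ curve, I would pick, for each $\eps>0$, a relatively open $G\subset U$ with $\Cp(G)<\eps$ such that $u|_{U\setm G}$ is continuous; here it is convenient to use the version of quasicontinuity attached to the capacity $\Cp$ on $X$ (legitimate by Proposition~\ref{prop-qcont-char}). Since $U$ is quasiopen, hence \p-path open, \p-a.e.\ curve $\ga$ in $U$ meets $G$ in a relatively open subset of $[0,l_\ga]$; and since $\Cp(G)<\eps$, for \p-a.e.\ curve the set $\ga^{-1}(G)$ has small (indeed, letting $\eps\to0$ along a sequence, eventually empty) one-dimensional content in the sense controlled by the modulus. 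Combining these over a sequence $\eps_j\to0$ and discarding the exceptional curve family of zero \p-modulus, for \p-a.e.\ curve $\ga$ we get that $u\circ\ga$ is a uniform limit, on the closed sets $\ga^{-1}(U\setm G_j)$ exhausting $[0,l_\ga]$, of continuous functions, so $u\circ\ga$ is continuous on $[0,l_\ga]$.

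For the converse — which is where Theorem~\ref{thm-p-path-quasiopen} does the real work — assume $u$ is measurable, finite q.e., and $u\circ\ga$ is continuous for \p-a.e.\ curve $\ga:[0,l_\ga]\to U$. The key observation is that for each open $V\subset\R$ (or, more carefully, for each set $V$ from a countable basis such as open rational intervals) the preimage $u^{-1}(V)$ is \p-path open in $U$: if $\ga$ is one of the \p-a.e.\ curves along which $u\circ\ga$ is continuous, then $(u\circ\ga)^{-1}(V)=\ga^{-1}(u^{-1}(V))$ is relatively open in $[0,l_\ga]$. Since $U$ itself is \p-path open in $X$ and concatenation/restriction of curves only affects a curve family of zero \p-modulus, $u^{-1}(V)$ is \p-path open in $X$. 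Now Theorem~\ref{thm-p-path-quasiopen} applies and gives that $u^{-1}(V)$ is quasiopen in $X$; in particular it is measurable, consistently with the hypothesis. Taking a countable base $\{V_k\}$ and, for each $k$, a relatively open $G_k$ with $\Cp(G_k)<\eps 2^{-k}$ such that $u^{-1}(V_k)\setm G_k$ is (relatively) open, set $G=\bigcup_k G_k$, so $\Cp(G)<\eps$. On $U\setm G$ every $u^{-1}(V_k)\setm G$ is open, hence $u|_{U\setm G}$ is continuous, which is exactly quasicontinuity; by Proposition~\ref{prop-qcont-char} it does not matter which of the two capacities we used.

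I expect the main obstacle to be the careful bookkeeping of \emph{exceptional curve families} in the converse direction: passing from ``$u\circ\ga$ continuous for \p-a.e.\ curve in $U$'' to ``$u^{-1}(V)$ is \p-path open in $X$'' requires knowing that the curves witnessing \p-path openness of $U$, the curves along which $u\circ\ga$ is continuous, and the curves obtained by restriction or concatenation all fail only on a family of zero \p-modulus — this uses the standard stability of \p-modulus under these operations, but it must be stated cleanly. A secondary technical point is handling the values $\pm\infty$: since $u$ is finite only q.e., one should either work on $U$ minus a set of zero capacity (which changes neither quasicontinuity nor the curve condition, as curves through a zero-capacity set form a \p-exceptional family) or incorporate neighbourhoods of $\pm\infty$ into the countable base. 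Everything else is routine once Theorem~\ref{thm-p-path-quasiopen} and Proposition~\ref{prop-qcont-char} are in hand.
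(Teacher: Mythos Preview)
Your converse direction is correct and is essentially the paper's argument: the level sets (or preimages of a countable base) are \p-path open, hence quasiopen by Theorem~\ref{thm-p-path-quasiopen}, and then Proposition~\ref{prop-qcont-char}\,\ref{it-char} yields quasicontinuity. The paper packages this as Proposition~\ref{prop-qcont=>composition} and then deduces Theorem~\ref{thm-qcont=>composition-intro} in one line, but the content is the same.

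The forward direction as you have written it, however, does not work. From $\Cp(G_j)\to 0$ one cannot conclude that for \p-a.e.\ curve $\ga$ the set $\ga^{-1}(G_j)$ is \emph{eventually empty}: small capacity of an open set gives no direct bound on the \p-modulus of the family of curves meeting it. And without ``eventually empty'', the fallback claim---that $u\circ\ga$ is continuous because its restriction to each closed set $F_j=\ga^{-1}(U\setm G_j)$ is continuous and these sets exhaust $[0,l_\ga]$---is false in general: take $f=\chi_{\{1\}}$ on $[0,1]$ with $F_j=[0,1-1/j]\cup\{1\}$; each $f|_{F_j}$ is continuous (the point $1$ is isolated in $F_j$), the $F_j$ exhaust $[0,1]$, yet $f$ is discontinuous. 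There is no ``uniform limit'' here, only restrictions of the same function to growing domains. The paper avoids this by running the level-set argument symmetrically in \emph{both} directions: since $u$ is quasicontinuous, Proposition~\ref{prop-qcont-char}\,\ref{it-char} gives that all $U_\al$ and $V_\al$ are quasiopen, hence \p-path open by \cite[Remark~3.5]{Sh-harm}; then for \p-a.e.\ curve $\ga$ and all rational $q$ the sets $\ga^{-1}(U_q)$ and $\ga^{-1}(V_q)$ are relatively open in $[0,l_\ga]$, which immediately gives continuity of $u\circ\ga$. This is both shorter and requires no assumptions on $X$ beyond those needed for Proposition~\ref{prop-qcont-char}.
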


In Proposition~\ref{prop-qcont-char}, we also provide a characterization of 
quasicontinuity using quasiopen sets in the spirit of 
Fuglede~\cite[Lemma~3.3]{Fugl71}.
In \cite[Theorem~1.4]{BBLat2} yet another characterization of quasicontinuity
was given, this time in terms of fine continuity,
see also 
\cite[Lemma, p.\ 143]{Fugl71}.

Newtonian functions are defined more precisely
than the usual Sobolev functions 
(in the sense that the classes of representatives 
are narrower),
and under
the assumptions of Theorem~\ref{thm-p-path-quasiopen}, it was shown in
Bj\"orn--Bj\"orn--Shan\-mu\-ga\-lin\-gam~\cite{BBS5} 
that \emph{all} Newtonian functions on $X$ and on open 
subsets of $X$ are quasicontinuous.
Moreover, the recent results in Ambrosio--Colombo--Di Marino~\cite{AmbCD}
and Ambrosio--Gigli--Savar\'e~\cite{AmbGS} imply that the same holds if
$X$ is  a complete doubling metric space and $1<p<\infty$.

Using the characterization in
Theorem~\ref{thm-qcont=>composition-intro} we can extend the quasicontinuity
result from \cite{BBS5}
to quasiopen sets as follows.
See also Bj\"orn--Bj\"orn--Latvala~\cite{BBLat3} and 
Remark~\ref{rmk-quasi-Lindelof}.

\begin{thm} \label{thm-qcont-U}
Assume that the metric space $X$ equipped with a doubling measure 
$\mu$ is complete and supports  a \p-Poincar\'e inequality.
Let $U \subset X$ be quasiopen.
Then every function $u \in \Nploc(U)$ is quasicontinuous.
\end{thm}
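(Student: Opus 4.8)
The plan is to reduce Theorem~\ref{thm-qcont-U} to the already available quasicontinuity result for Newtonian functions on $X$ (and on open sets) from \cite{BBS5}, via the curve-based characterization of quasicontinuity in Theorem~\ref{thm-qcont=>composition-intro}. Since $U$ is quasiopen, by Theorem~\ref{thm-p-path-quasiopen} it is \p-path open and measurable. So the task is: given $u \in \Nploc(U)$, verify the three conditions on the right-hand side of Theorem~\ref{thm-qcont=>composition-intro}, namely that $u$ is measurable, that $u$ is finite q.e., and that $u \circ \ga$ is continuous for \p-a.e.\ curve $\ga \colon [0,l_\ga] \to U$.

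First I would note that measurability of $u$ is immediate since $u \in \Nploc(U)$ and $U$ is measurable; finiteness q.e.\ is a standard property of Newtonian (and hence $\Nploc$) functions. The heart of the matter is the curve condition. Here I would use the restriction result quoted in the excerpt from \cite[Proposition~3.5]{BBnonopen}: because $U$ is \p-path open and measurable, the minimal \p-weak upper gradient of $u$ with respect to $U$ coincides with the one taken with respect to $X$, at least locally. More importantly, a function lying in $\Nploc(U)$ is, by definition, absolutely continuous along \p-a.e.\ curve in $U$ (the ACC$_p$ property), and in particular $u \circ \ga$ is continuous for \p-a.e.\ curve $\ga$ in $U$ — curves in $U$ being exactly the curves in $X$ whose image lies in $U$. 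Combining ACC$_p$ of $u$ on $U$ with the \p-path openness (so that \p-a.e.\ curve meets $U$ in a relatively open set) gives precisely the curve hypothesis of Theorem~\ref{thm-qcont=>composition-intro}. Then that theorem yields quasicontinuity of $u$ directly.

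An alternative, more self-contained route would be to localize: cover $U$ (up to a set of small capacity) by pieces on which one can apply the known open-set result. For this one would use the Bj\"orn--Bj\"orn--Latvala characterization that a quasiopen set is, up to a set of capacity zero, finely open, and then exploit that fine topology arguments let one approximate $u$ on $U$ by genuinely Newtonian functions whose quasicontinuity is already known from \cite{BBS5}. However, I expect the cleanest proof is the first one, essentially a one-line deduction once Theorem~\ref{thm-qcont=>composition-intro} is in hand: $u \in \Nploc(U)$ forces $u\circ\ga$ continuous for \p-a.e.\ $\ga$ into $U$, $u$ is measurable and finite q.e., hence $u$ is quasicontinuous.

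The main obstacle, and the one requiring a little care, is the passage from ``$u \in \Nploc(U)$'' to ``$u\circ\ga$ continuous for \p-a.e.\ curve $\ga\colon[0,l_\ga]\to U$'' when $U$ is merely quasiopen rather than open: one must be sure that the \emph{local} Newtonian condition on $U$ (finitely many or countably many pieces) patches up to a genuine curve statement on all of $U$, using that $U$ is \p-path open so that \p-a.e.\ curve spends its time in the relatively open, hence locally controllable, part of $U$. Once that technical point is handled — and it is handled precisely by the \p-path openness together with the restriction result \cite[Proposition~3.5]{BBnonopen} — the theorem follows immediately from Theorem~\ref{thm-qcont=>composition-intro}. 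I would also remark, as the paragraph after the statement hints, that a quasi-Lindel\"of argument may be needed to go from $\Np$ to $\Nploc$ cleanly, and I would point to Remark~\ref{rmk-quasi-Lindelof} for that.
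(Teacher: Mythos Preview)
Your approach is essentially the paper's: the proof there combines Theorem~\ref{thm-p-path-quasiopen} with Corollary~\ref{cor-all-Np-qcont}, and the latter is exactly your argument---measurability, finiteness q.e., and the ACC$_p$ property of Newtonian functions feed directly into Proposition~\ref{prop-qcont=>composition} (the general form of Theorem~\ref{thm-qcont=>composition-intro}) to give quasicontinuity. Two small corrections: the restriction result \cite[Proposition~3.5]{BBnonopen} is not needed (ACC$_p$ on $U$ already gives continuity of $u\circ\ga$ for \p-a.e.\ curve $\ga$ in $U$, which is condition~\ref{a-3} of Proposition~\ref{prop-qcont=>composition}), and you have misread Remark~\ref{rmk-quasi-Lindelof}---its point is precisely that this proof \emph{avoids} the quasi-Lindel\"of principle, in contrast to the earlier proof in~\cite{BBLat3}.
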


In the last section we weaken the 
assumptions in Theorem~\ref{thm-p-path-quasiopen}
and replace the doubling property and the Poincar\'e inequality
by the requirement that bounded Newtonian functions
are quasicontinuous, which is a much weaker 
assumption. 
In particular, we obtain the following result.
In Section~\ref{sect-no-PI} we give a more general 
version using coanalytic sets.

\begin{thm} \label{thm-p-path-quasiopen-no-PI-intro}
Assume that $X$ is complete, and that every 
bounded $u \in \Np(X)$ is quasicontinuous.
Then every Borel
\p-path open set $U\subset X$ is quasiopen.
\end{thm}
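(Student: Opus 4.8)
The plan is to reduce the quasiopenness of $U$ to the quasicontinuity hypothesis by producing, on each ball of a countable cover of $X$, a bounded Newtonian function whose positivity set is exactly $U$ there. First I would record the elementary closure properties of the class of quasiopen sets: adding or deleting a set of capacity zero preserves quasiopenness (using that $\Cp$ is an outer capacity), the intersection of a quasiopen set with an open set is quasiopen, and both countable unions and countable intersections of quasiopen sets are quasiopen — if $V_j\cup G_j$ is open with $\Cp(G_j)$ small, put $G=\bigcup_j G_j$ and note that $(\bigcup_j V_j)\cup G$ and $(\bigcap_j V_j)\cup G$ are both open. Likewise, if $v$ is quasicontinuous then every superlevel set $\{v>a\}$, and hence $\{v>0\}=\bigcup_n\{v>1/n\}$, is quasiopen. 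Fixing balls $B_i$ covering $X$, it therefore suffices to exhibit a bounded $v_i\in\Np(X)$ with $\{v_i>0\}=U\cap 2B_i$: the hypothesis gives that $v_i$ is quasicontinuous, so $U\cap 2B_i$ is quasiopen, whence so is $U\cap B_i$, and finally $U=\bigcup_i(U\cap B_i)$ is quasiopen.

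To build $v_i$, use the definition of \p-path openness to fix a curve family $\Gamma$ of zero \p-modulus such that $\ga^{-1}(U)$ is relatively open for every rectifiable curve $\ga\notin\Gamma$, and choose $\rho\in\Lp(X)$ which is bounded away from $0$ on every ball and satisfies $\int_\ga\rho\,ds=\infty$ for all $\ga\in\Gamma$ (such $\rho$ exists: add a strictly positive $\Lp$ function, bounded below on balls, to a function witnessing $\mathrm{Mod}_p(\Gamma)=0$). With $\eta_i$ a Lipschitz cutoff, $\eta_i=1$ on $B_i$ and $\eta_i=0$ off $2B_i$, set
\[
 v_i(x)=\eta_i(x)\,\min\Bigl\{1,\ \inf_\ga\int_\ga\rho\,ds\Bigr\},
\]
the infimum over rectifiable curves $\ga$ with $\ga(0)=x$ that meet $X\setminus U$ (the constant curve included). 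Prepending an arbitrary curve from $x$ to $y$ to a near-optimal competitor for $y$ shows that $\rho$ is an upper gradient of $x\mapsto\min\{1,\inf_\ga\int_\ga\rho\}$, so $v_i\in\Np(X)$; it is bounded and supported in the finite-measure set $2B_i$. The constant curve at a point of $X\setminus U$ gives $v_i=0$ there, so $\{v_i>0\}\subseteq U\cap 2B_i$. The Borel assumption on $U$ enters through the measurability of $v_i$: since $X$ is complete and separable, the space of rectifiable curves is a Suslin space, the infimum defining $v_i$ is obtained by projecting a Borel subset of $X\times(\text{curves})$ to $X$, and is therefore analytic, hence $\mu$-measurable.

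The key point, and the step I expect to be the main obstacle, is the reverse inclusion: $\inf_\ga\int_\ga\rho\,ds>0$ at every $x\in U$; this is where \p-path openness is used essentially, beyond the easy Shanmugalingam direction. If $x\in U$ has positive distance to $X\setminus U$ this is immediate, since every competing curve has an initial subarc of definite length on which $\rho$ is bounded below. Otherwise $x\in U\cap\overline{X\setminus U}$, and I would argue by contradiction: if there were curves $\ga_j$ from $x$ meeting $X\setminus U$ with $\int_{\ga_j}\rho\,ds<2^{-j}$, then $\ga_j\notin\Gamma$ and, for large $j$, being $\rho$-short with $\rho$ bounded below on a fixed ball containing $x$, the subarcs $\ga_j$ up to their first hitting time of $X\setminus U$ are metrically short and stay in that ball; concatenating their out-and-back traversals yields a rectifiable curve $\sigma$ with $\int_\sigma\rho\,ds<\infty$ that accumulates at $x\in U$ while meeting $X\setminus U$ at parameters converging to its endpoint, so $\sigma^{-1}(U)$ is not relatively open, forcing $\sigma\in\Gamma$ and $\int_\sigma\rho\,ds=\infty$, a contradiction. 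Thus $\{v_i>0\}=U\cap 2B_i$ exactly, and the reduction above finishes the proof. (For the coanalytic version in Section~\ref{sect-no-PI} the same argument applies, the only change being that the set of curves meeting $X\setminus U$ is now analytic because $X\setminus U$ is, so $v_i$ remains $\mu$-measurable.)
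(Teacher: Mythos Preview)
Your approach matches the paper's: build a bounded Newtonian function $u(x)=\min\{1,\inf_\ga\int_\ga\rho\,ds\}$ over curves from $x$ to $X\setminus U$, prove $u>0$ on $U$ via the out-and-back concatenation argument, invoke the quasicontinuity hypothesis so that $U=\{u>0\}$ is quasiopen, and patch over a ball cover. The paper first reduces to bounded $U$ and uses $\rho+\chi_B$ (so $l_{\gamma_j}\le2^{-j}$ is immediate), whereas you take $\rho$ bounded below on balls and localize with a Lipschitz cutoff; these are interchangeable bookkeeping choices, and your measurability sketch is precisely the content of Proposition~\ref{prop-meas-for-anal-Borel}.

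Two corrections. First, your aside that \emph{countable intersections} of quasiopen sets are quasiopen is false: in $\R$ (any $p\ge1$, so points have positive capacity) one has $\bigcap_j(-1/j,1/j)=\{0\}$, and any open $G$ with $\{0\}\cup G$ open must contain a punctured interval about $0$, hence a point, giving $\Cp(G)\ge\Cp(\{\text{point}\})>0$. Your claimed reason, that $(\bigcap_j V_j)\cup G$ is open, fails because this equals $\bigcap_j(V_j\cup G)$, a countable intersection of open sets. You never actually use this claim, so the proof survives; just delete it. Second, your measurability sketch (projecting a Borel subset of $X\times\{\text{curves}\}$ to an analytic subset of $X$) needs the curve space to be Polish, hence $X$ separable, which Theorem~\ref{thm-p-path-quasiopen-no-PI-intro} does not assume. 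The paper handles this (Theorems~\ref{thm-p-path-quasiopen-no-PI-separable} and~\ref{thm-p-path-quasiopen-no-PI-gen}) by first proving the result on the always-separable set $\supp\mu$ and then lifting back using $\Cp(X\setminus\supp\mu)=0$; you should insert this reduction.
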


We also prove
a similar modification of Theorem~\ref{thm-qcont=>composition-intro},
see Proposition~\ref{prop-qcont=>composition-no-PI}.
Our proofs of these generalized results (without 
doubling and Poincar\'e assumptions)
are based on the following result
which guarantees measurability
of certain functions defined by their upper gradients.
It may be of independent interest, and generalizes Corollary~1.10 from
J\"arvenp\"a\"a--J\"arvenp\"a\"a--Rogovin--Rogovin--Shanmugalingam~\cite{JJRRS},
where a similar measurability result was proved in the singleton
case $X\setm U =\{x_0\}$.

\begin{prop} \label{prop-meas-for-anal-Borel}
Assume that $X$ is complete  and separable, and that
$U$ is a co\-ana\-lytic set.
For every Borel function $\rho:X\to[0,\infty]$ define
\[
u_{\rho}(x) =  \inf_{\ga\in\Gamma_x} \int_\ga \rho\,ds,
\]
where $\Gamma_x$ is the family of  
all rectifiable curves $\ga: [0,l_\ga] \to X$ 
\textup{(}including constant
curves\/\textup{)} such that $\ga(0)=x$ and $\ga(l_\ga)\in X\setm U$.
Then $u_{\rho}$ is measurable.
\end{prop}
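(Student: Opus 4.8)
The plan is to exhibit $u_\rho$ as a Suslin (analytic) function, i.e.\ to show that for every $t\in\R$ the sublevel set $\{x: u_\rho(x)<t\}$ is analytic, and likewise that $\{x: u_\rho(x)>t\}$ is analytic (equivalently coanalytic together with the previous, or handled separately); since in a complete separable metric space every analytic set is universally measurable — in particular Lebesgue/$\mu$-measurable — this yields measurability of $u_\rho$. First I would set up the standard parametrization of rectifiable curves: a rectifiable curve $\ga:[0,l_\ga]\to X$ may be reparametrized by arc length and, after rescaling the domain to $[0,1]$, is encoded as a $1$-Lipschitz map $[0,1]\to X$ together with its length $l_\ga\in[0,\infty)$. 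The space $C([0,1],X)$ with the sup metric is complete and separable (as $X$ is), the subset of $1$-Lipschitz maps is closed, and the length functional $\ga\mapsto l_\ga$ is Borel (indeed lower semicontinuous) on it. Thus the ambient curve space $\Ga$ is a Polish space.

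Next I would analyze the "constraint" set and the "cost" functional on $\Ga\times X$. The evaluation maps $(\ga,x)\mapsto \ga(0)$ and $\ga\mapsto\ga(1)$ are continuous, so the set $\{(\ga,x): \ga(0)=x\}$ is closed. The key point is the endpoint constraint $\ga(1)\in X\setm U$: since $U$ is coanalytic, $X\setm U$ is analytic, and hence $\{\ga: \ga(1)\in X\setm U\}$ is analytic in $\Ga$ (preimage of an analytic set under a continuous map). For the cost, $(\ga,\rho)\mapsto\int_\ga\rho\,ds$ — here with $\rho$ a fixed Borel function — I would show $\ga\mapsto\int_\ga\rho\,ds = \int_0^{l_\ga}\rho(\ga_{\mathrm{arc}}(s))\,ds$ is a Borel function of $\ga$: for $\rho$ continuous and bounded this is continuous, the class of $\rho$ for which it is Borel is closed under monotone limits, so by a monotone-class / Dynkin argument it holds for all Borel $\rho\ge 0$ (taking values in $[0,\infty]$ via monotone approximation by truncations). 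Constant curves are included and contribute cost $0$, which only makes the infimum smaller and is harmless.

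Assembling: the set
\[
A_t = \{x\in X: \exists\,\ga\in\Ga \text{ with } \ga(0)=x,\ \ga(1)\in X\setm U,\ \textstyle\int_\ga\rho\,ds < t\}
\]
is the projection onto $X$ of the set $\{(\ga,x): \ga(0)=x\}\cap\{(\ga,x):\ga(1)\in X\setm U\}\cap\{(\ga,x):\int_\ga\rho\,ds<t\}$, which is the intersection of a closed set, an analytic set, and a Borel set in the Polish space $\Ga\times X$, hence analytic; projections of analytic sets are analytic, so $A_t=\{u_\rho<t\}$ is analytic for every $t$. Every analytic subset of a separable metric space is measurable with respect to any complete Borel measure (in particular $\mu$ restricted to balls of finite measure), so $u_\rho$ is measurable. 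I expect the main obstacle to be the careful treatment of the curve space — making the arc-length reparametrization measurable, checking that the length and the line integral $\ga\mapsto\int_\ga\rho\,ds$ are genuinely Borel (not merely analytically measurable) on $\Ga$, and verifying that including constant/degenerate curves causes no pathology; once the descriptive-set-theoretic bookkeeping is in place, the coanalyticity of $U$ enters exactly at the single step where it is needed, namely that $X\setm U$ being analytic keeps the endpoint-constrained set analytic rather than merely projective.
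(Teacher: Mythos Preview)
Your approach is essentially the same as the paper's: parametrize rectifiable curves in a Polish space, show the line-integral functional $\ga\mapsto\int_\ga\rho\,ds$ is Borel via a monotone-class argument, use the analyticity of $X\setm U$ to make the endpoint-constrained set analytic, and project to conclude that the sublevel sets $\{u_\rho<t\}$ are analytic and hence measurable.

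Two small points. First, a slip: a curve of length $l_\ga$ reparametrized by arc length and rescaled to $[0,1]$ is $l_\ga$-Lipschitz, not $1$-Lipschitz, so your curve space $\Ga$ should be the union $\bigcup_n \Ll_n$ of $n$-Lipschitz maps (an $F_\sigma$ subset of $C([0,1],X)$, hence standard Borel, which suffices for the projection argument). Second, the paper avoids this issue by a neat reduction: it first treats the case $\rho\ge\de>0$, so that any curve with $\int_\ga\rho\,ds<\alp$ has length at most $\alp/\de$ and hence lies in the single closed (hence Polish) space $\Ll_{\alp/\de}$; the general case then follows by writing $u_\rho=\lim_{j\to\infty}u_{\rho+1/j}$. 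Either route works, but the paper's trick keeps the curve space genuinely complete rather than merely $F_\sigma$.
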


The paper is organized as follows.
In Section~\ref{sect-prelim} we recall the necessary background on Newtonian
spaces.
Section~\ref{sect-qcont} deals with the two notions of quasicontinuity.
The results in that section are valid in arbitrary metric spaces.
Theorems~\ref{thm-p-path-quasiopen}--\ref{thm-qcont-U} are
proved in Section~\ref{sect-p-path-open}. 
In Section~\ref{sect-no-PI} some partial generalizations of the results 
from Section~\ref{sect-p-path-open} are proved without the 
Poincar\'e and doubling
assumptions.
We also formulate two open problems about Borel representatives of
Newtonian functions.

\begin{ack}
The  first two authors were supported by the Swedish Research Council.
Part of this research was done while J.~B. visited
the Charles University in Prague in 2014; 
she thanks the Department of Mathematical Analysis for support
and hospitality. 
\end{ack}

\section{Notation and preliminaries}
\label{sect-prelim}

We assume throughout this paper that $1 \le p<\infty$
and that $X=(X,d,\mu)$ is a metric space equipped
with a metric $d$ and a positive complete  Borel  measure $\mu$
such that $\mu(B)<\infty$ for all open balls $B \subset X$.

A \emph{curve} is a continuous mapping from an interval,
and a \emph{rectifiable} curve $\ga$ is a curve with finite length $l_\ga$.
We will only consider curves which are compact and rectifiable.
Unless otherwise stated they will also be nonconstant
and parameterized by arc length $ds$.
We follow Heinonen and Koskela~\cite{HeKo98} in introducing
upper gradients as follows (they called them very weak gradients).

\begin{deff} \label{deff-ug}
A nonnegative Borel function $g$ on $X$ is an \emph{upper gradient}
of an extended real-valued function $f$
on $X$ if for all nonconstant, compact and rectifiable curves
$\gamma: [0,l_{\gamma}] \to X$,
\begin{equation} \label{ug-cond}
        |f(\gamma(0)) - f(\gamma(l_{\gamma}))| \le \int_{\gamma} g\,ds,
\end{equation}
where we follow the convention that the left-hand side is $\infty$
whenever at least one of the 
terms therein is infinite.
If $g$ is a nonnegative measurable function on $X$
and if \eqref{ug-cond} holds for \p-almost every curve (see below),
then $g$ is a \emph{\p-weak upper gradient} of~$f$.
\end{deff}

Here we say that a property holds for \emph{\p-almost every curve}
if it fails only for a curve family $\Ga$ with zero \p-modulus,
i.e.\ there exists $0\le\rho\in L^p(X)$ such that
$\int_\ga \rho\,ds=\infty$ for every curve $\ga\in\Ga$.
Note that a \p-weak upper gradient \emph{need not} be a Borel function,
it is only required to be measurable.

The \p-weak upper gradients were introduced in
Koskela--MacManus~\cite{KoMc}. It was also shown there
that if $g \in \Lp(X)$ is a \p-weak upper gradient of $f$,
then one can find a sequence $\{g_j\}_{j=1}^\infty$
of upper gradients of $f$ such that $g_j \to g $ in $L^p(X)$.
If $f$ has an upper gradient in $\Lp(X)$, then
it has a \emph{minimal \p-weak upper gradient} $g_f \in \Lp(X)$
in the sense that for every \p-weak upper gradient $g \in \Lp(X)$ of $f$ we have
$g_f \le g$ a.e., see Shan\-mu\-ga\-lin\-gam~\cite{Sh-harm} 
and Haj\l asz~\cite{Haj03}.
The minimal \p-weak upper gradient is well defined
up to a set of measure zero in the cone of nonnegative functions in $\Lp(X)$.
Following Shanmugalingam~\cite{Sh-rev},
we define a version of Sobolev spaces on the metric measure space $X$.

\begin{deff} \label{deff-Np}
Let for measurable $f$,
\[
        \|f\|_{\Np(X)} = \biggl( \int_X |f|^p \, \dmu
                + \inf_g  \int_X g^p \, \dmu \biggr)^{1/p},
\]
where the infimum is taken over all upper gradients $g$ of $f$.
The \emph{Newtonian space} on $X$ is
\[
        \Np (X) = \{f: \|f\|_{\Np(X)} <\infty \}.
\]
\end{deff}
% \medskip needed as def ends with display

The space $\Np(X)/{\sim}$, where  $f \sim h$ if and only if $\|f-h\|_{\Np(X)}=0$,
is a Banach space and a lattice, see Shan\-mu\-ga\-lin\-gam~\cite{Sh-rev}.
In this paper we assume that functions in $\Np(X)$ are defined everywhere,
not just up to an equivalence class in the corresponding function space.
Nevertheless, we will still say that
$\ut$ is a \emph{representative} of $u$ if $\ut \sim u$.

\begin{deff} \label{deff-sobcap}
The \emph{Sobolev capacity} of an arbitrary set $E\subset X$ is
\[
\Cp(E) = \inf_u\|u\|_{\Np(X)}^p,
\]
where the infimum is taken over all $u \in \Np(X)$ such that
$u\geq 1$ on $E$.
\end{deff}

The capacity is countably subadditive.
A property holds \emph{quasieverywhere} (q.e.)\
if the set of points  for which the property does not hold
has capacity zero.
The capacity is the correct gauge
for distinguishing between two Newtonian functions.
If $u \in \Np(X)$ and $v$ is everywhere defined, 
then $v\sim u$ if and only if $v=u$ q.e.
Moreover, Corollary~3.3 in Shan\-mu\-ga\-lin\-gam~\cite{Sh-rev} shows that
if $u,v \in \Np(X)$ and $u= v$ a.e., then $u=v$ q.e.
In particular, $\ut$ is a representative of $u$ if and only if $\ut=u$ q.e.

We thus see that the equivalence classes in 
$\Np(X)/{\sim}$ are more narrowly defined than for the usual Sobolev spaces.
In weighted  $\R^n$ (with a \p-admissible weight and $p>1$), $\Np(\R^n)/{\sim}$ 
coincides with the refined Sobolev space
as defined in Heinonen--Kilpel\"ainen--Martio~\cite[p.~96]{HeKiMa},
see Bj\"orn--Bj\"orn~\cite[Appendix~A.2]{BBbook}.

For a measurable set $U\subset X$, the Newtonian space $\Np(U)$ is defined by
considering $(U,d|_U,\mu|_U)$ as a metric space in its own right.
It comes naturally with the intrinsic Sobolev capacity that
we denote by $\CpU$.

The measure  $\mu$  is \emph{doubling} if
there exists a \emph{doubling constant} $C>0$ such that for all balls
$B=B(x_0,r):=\{x\in X: d(x,x_0)<r\}$ in~$X$,
\begin{equation*}
        0 < \mu(2B) \le C \mu(B) < \infty,
\end{equation*}
where $\delta B=B(x_0,\delta r)$. 
A metric space with a doubling measure is proper\/
\textup{(}i.e.\ such that
closed and bounded subsets are compact\/\textup{)}
if and only if it is complete.
See Heinonen~\cite{heinonen} for more on doubling measures.

We will also need the following definition.

\begin{deff} \label{def.PI.}
The space $X$ supports a \emph{\p-Poincar\'e inequality} if
there exist constants $C>0$ and $\lambda \ge 1$
such that for all balls $B \subset X$,
all integrable functions $f$ on $X$ and all 
(\p-weak) upper gradients $g$ of $f$,
\[ 
        \vint_{B} |f-f_B| \,\dmu
        \le C \diam(B) \biggl( \vint_{\lambda B} g^{p} \,\dmu \biggr)^{1/p},
\] 
where $ f_B
 :=\vint_B f \,\dmu
:= \int_B f\, d\mu/\mu(B)$.
\end{deff}

See Bj\"orn--Bj\"orn~\cite{BBbook} or 
Heinonen--Koskela--Shanmugalingam--Tyson~\cite{HKST}
for further discussion.

\section{Quasicontinuity and quasiopen sets}
\label{sect-qcont}

We are now ready to define the two 
notions of quasicontinuity considered in this paper.
We let $\Cp$ denote the Sobolev capacity taken with respect 
to the underlying space $X$ and $\CpU$ will be the 
intrinsic Sobolev capacity 
taken with respect to $U$, i.e.\ with $X$ in Definition~\ref{deff-sobcap}
replaced by $U$.

\begin{deff}   \label{def-qcont}
Let $U \subset X$ be measurable.
A function $u: U \to \eR:=[-\infty,\infty]$ 
is \emph{$\CpU$-quasicontinuous \textup{(resp.\ }$\Cp$-quasicontinuous\/\textup{)}}
if for every $\eps>0$ there is a relatively open set $G\subset U$ such that 
$\CpU(G)<\eps$ 
(resp.\ an open set $G \subset X$ such that $\Cp(G)<\eps$)
and such that $u|_{U \setm G}$ is finite and continuous.
\end{deff}

This distinction  was tacitly suppressed in
Bj\"orn--Bj\"orn--Shanmugalingam~\cite{BBS5},
Bj\"orn--Bj\"orn~\cite{BBbook} 
and Bj\"orn--Bj\"orn--Latvala~\cite{BBLat1}--\cite{BBLat3}.
The first two 
deal only with quasicontinuous functions
on open sets, and in this case 
the two definitions are relatively easily shown to be equivalent. 
In this note we show that the same equivalence 
holds for quasiopen $U$
(as considered in \cite{BBLat1}--\cite{BBLat3}); 
the proof of this is 
more involved, although
still rather elementary. 
The equivalence holds  without
any assumptions on the metric space
other than that the measure of balls should be finite.

\begin{deff}   \label{def-q-open}
A set $U\subset X$ is \emph{quasiopen} if for every
$\varepsilon>0$ there is an open set $G\subset X$ such that $\Cp(G)<\varepsilon$
and $G\cup U$ is open.
\end{deff}

Quasiopen sets are measurable by Lemma~9.3 in 
Bj\"orn--Bj\"orn~\cite{BBnonopen}.
It is also quite easy to see that every ($\CpU$ or $\Cp$)-quasicontinuous
function on a measurable set
(and thus in particular on a quasiopen set) is measurable.

The quasiopen sets do not (in general) form a topology.
This is easily seen in unweighted $\R^n$ with $p \le n$ as all singleton
sets are quasiopen, but not all sets are quasiopen.
We shall prove
the following characterizations of quasiopen sets.

\begin{prop} \label{prop-quasiopen-char}
Let $U\subset X$ be a  quasiopen set and 
$V\subset U$.
Then the following statements are equivalent\/\textup{:}
\begin{enumerate}
\item \label{it-q-open-Cp}
$V$ is quasiopen\/ \textup{(}in $X$\textup{)}\/\textup{;}
\item \label{it-q-open-Cp-U}
$V$ is $\Cp$-quasiopen in $U$, i.e.\
for every $\eps>0$ there is a relatively open set $G \subset U$
such that $\Cp(G)<\eps$ and $G \cup V$ is relatively open in $U$\/\textup{;}
\item \label{it-q-open-CpU}
$V$ is $\CpU$-quasiopen in $U$, i.e.\
for every $\eps>0$ there is a relatively open set $G \subset U$
such that $\CpU(G)<\eps$ and $G \cup V$ is relatively open in $U$\/\textup{.}
\end{enumerate}
\end{prop}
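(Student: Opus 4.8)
\textbf{Proof plan for Proposition~\ref{prop-quasiopen-char}.}
The plan is to prove the cycle of implications \eqref{it-q-open-Cp}$\imp$\eqref{it-q-open-Cp-U}$\imp$\eqref{it-q-open-CpU}$\imp$\eqref{it-q-open-Cp}. The implication \eqref{it-q-open-Cp}$\imp$\eqref{it-q-open-Cp-U} should be essentially immediate: if $V$ is quasiopen in $X$, pick an open $G'\subset X$ with $\Cp(G')<\eps$ and $G'\cup V$ open in $X$; then $G:=G'\cap U$ is relatively open in $U$, has $\Cp(G)\le\Cp(G')<\eps$, and $G\cup V=(G'\cup V)\cap U$ is relatively open in $U$ (using $V\subset U$). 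For \eqref{it-q-open-Cp-U}$\imp$\eqref{it-q-open-CpU}, since $G\subset U$, we have the trivial comparison $\CpU(G)\le \Cp(G)$ (every competitor $u\in\Np(X)$ restricts to a competitor $u|_U\in\Np(U)$ with no larger norm), so any $G$ that works for \eqref{it-q-open-Cp-U} also works for \eqref{it-q-open-CpU}.

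The substantive direction is \eqref{it-q-open-CpU}$\imp$\eqref{it-q-open-Cp}, which is where the quasiopenness of $U$ itself must be used, since in general $\Cp$ can be much larger than $\CpU$. The idea is as follows. Fix $\eps>0$. Using quasiopenness of $U$, choose an open set $W\subset X$ with $\Cp(W)<\eps/3$ such that $U\cup W=:\Om$ is open. By \eqref{it-q-open-CpU}, choose a relatively open $G_0\subset U$ with $\CpU(G_0)<\delta$ (with $\delta$ to be fixed) such that $G_0\cup V$ is relatively open in $U$. Write $G_0\cup V = O\cap U$ for some open $O\subset X$. The candidate open set witnessing quasiopenness of $V$ will then be built from $(O\cap\Om)$ together with an open set covering $W$ and an open set of small $\Cp$-capacity that "upgrades" the bound $\CpU(G_0)<\delta$ to a genuine $\Cp$-bound on a set containing $G_0$. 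Concretely, $(O\cap\Om)\cup W$ is open, and $((O\cap\Om)\cup W)\cup V$ differs from the open set $(O\cap\Om)\cup W$ only inside $\Om\setm U\subset W$ together with possibly $G_0$; one checks $G:=\bigl((O\cap\Om)\cup W\bigr)\setm V$ is the "bad" set that must be shown to have small $\Cp$-capacity, and it is contained in $W\cup G_0$ up to sets we control.

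The main obstacle, and the step requiring the core estimate, is passing from the intrinsic bound $\CpU(G_0)<\delta$ to an extrinsic bound $\Cp(E)<\eps/3$ for some \emph{open in $X$} set $E\supset G_0$. The plan is: take a competitor $\phi\in\Np(U)$ with $\phi\ge 1$ on $G_0$ and $\|\phi\|_{\Np(U)}^p<\delta$; since $G_0$ is relatively open in $U$ we may also arrange $\phi\ge 1$ on a relatively open neighborhood of $G_0$ in $U$ (replace $\phi$ by $\min\{2\phi,1\}$, enlarging the norm by a bounded factor). Now extend $\phi$ to $\Np(\Om)$: one uses that $U$ is quasiopen with $U\cup W=\Om$ open and $\Cp(W)<\eps/3$, so that $\Np$ functions on $U$ extend (after modification on the small set $W$) to $\Np$ functions on $\Om$ with controlled norm — this type of gluing is exactly the content that Proposition~\ref{prop-CpU} and the restriction result of \cite[Proposition~3.5]{BBnonopen} make available, since on the \p-path open (hence, by the cited results, harmless) complement the minimal upper gradients agree. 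Having produced $\Phi\in\Np(\Om)\subset\Np(X)$ (extending by $0$ outside $\Om$ after truncation, using that $\Om$ is open) with $\Phi\ge1$ on a relatively open neighborhood of $G_0$ and $\|\Phi\|_{\Np(X)}^p\lesssim \delta+\Cp(W)$, the set $E:=\{\Phi>1/2\}$ is open in $X$, contains $G_0$, and has $\Cp(E)\le 2^p\|\Phi\|_{\Np(X)}^p$, which is $<\eps/3$ for $\delta$ small enough. Finally $G:=E\cup W$ is open, $\Cp(G)<\eps$, and $G\cup V$ is open: indeed $G\cup V=E\cup W\cup V$, and modulo $E\cup W$ this equals $E\cup W\cup(O\cap\Om)$ because $V\setm(E\cup W)\subset (G_0\cup V)\setm(E\cup W)=(O\cap U)\setm(E\cup W)\subset O\cap\Om$, so $G\cup V=E\cup W\cup(O\cap\Om)$ is a union of open sets. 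This completes the cycle. The delicate points to be careful about in the full write-up are the extension/gluing estimate across $W$ and the bookkeeping showing $V\setm(E\cup W)$ lies in the open set $O\cap\Om$.
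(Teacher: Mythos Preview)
Your implications \ref{it-q-open-Cp}$\imp$\ref{it-q-open-Cp-U}$\imp$\ref{it-q-open-CpU} are fine and match the paper. The problem is in \ref{it-q-open-CpU}$\imp$\ref{it-q-open-Cp}, where the two steps you flag as ``delicate'' are in fact genuine gaps.

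First, the extension step does not follow from the results you cite. Proposition~\ref{prop-CpU} only equates \emph{zero} sets of $\Cp$ and $\CpU$, and \cite[Proposition~3.5]{BBnonopen} says that minimal \p-weak upper gradients with respect to $X$ and $U$ agree \emph{for a function already defined on $X$}; neither produces an extension of $\phi\in\Np(U)$ to $\Np(X)$ with controlled norm. The obstruction is real: your single set $G_0$ is only relatively open in $U$ and may accumulate at the boundary of $\Om=U\cup W$, so no single Lipschitz cutoff $\eta$ supported in $\Om$ satisfies $\eta=1$ on $G_0$, and the gradient term $\phi\, g_\eta$ is not controllable by $\|\phi\|_{\Np(U)}$ alone. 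Second, even granting an extension $\Phi\in\Np(X)$, the set $E=\{\Phi>1/2\}$ has no reason to be open in $X$: the proposition is stated in full generality (no Poincar\'e inequality, no assumed quasicontinuity of Newtonian functions), and ``$\Phi\ge1$ on a relatively open neighbourhood in $U$'' does not make that neighbourhood open in $X$.

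The paper's proof resolves both issues simultaneously by using, for each $j$, a relatively open $G_j\subset U$ with $\CpU(G_j)<2^{-j}j^{-p}\eps$ and competitor $v_j\in\Np(U)$, together with an exhaustion $\Om_j\Subset\Om$ and Lipschitz cutoffs $\eta_j$ with $\Lip\eta_j\le j$. The factor $j^{-p}$ in the capacity bound exactly absorbs the gradient of $\eta_j$, so that $\phi:=\lim_k\min\{1-w,\max_{j\le k} v_j\eta_j\}$ lies in $\Np(X)$ with norm $\lesssim\eps$. The open set is then taken to be $H=G\cup\bigcup_j(G_j\cap\Om_j)$, which is verified to be open \emph{directly} (each $G_j\cap\Om_j$ is relatively open in $U$, and $G$ opens up $U$), rather than as a superlevel set. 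Your single-scale argument cannot be repaired without introducing this exhaustion-plus-summation mechanism.
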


Even though the quasiopen sets do not (in general) form a topology,
we still have the
 characterization \ref{it-char} below  of quasicontinuity
using quasiopen sets.

\begin{prop} \label{prop-qcont-char} 
Let $u:U \to \eR$ be a function on a quasiopen set $U\subset X$.
Then the following statements are equivalent\/\textup{:}
\begin{enumerate}
\renewcommand{\theenumi}{\textup{(\roman{enumi})}}%
\item \label{it-qcont}
$u$ is $\CpU$-quasicontinuous\/\textup{;}
\item \label{it-str-qcont}
$u$ is $\Cp$-quasicontinuous.
\end{enumerate}
Moreover, if $X$ is locally compact then 
these statements are equivalent to the following statement\/\textup{:}
\begin{enumerate}
\renewcommand{\theenumi}{\textup{(\roman{enumi})}}%
\addtocounter{enumi}{2}
\item \label{it-char}
$u$ is finite q.e.\ and the sets
$U_\alp:=\{x\in U:u(x)>\al\}$ and 
$V_\alp:=\{x\in U:u(x)<\al\}$, $\alp \in \R$, are quasiopen
{\rm(}in any of the equivalent senses \ref{it-q-open-Cp}, \ref{it-q-open-Cp-U}
and~\ref{it-q-open-CpU} of Proposition~\ref{prop-quasiopen-char}\/{\rm)}.
\end{enumerate}
\end{prop}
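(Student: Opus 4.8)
\emph{Proof plan.} The plan is to prove $\ref{it-str-qcont}\Rightarrow\ref{it-qcont}$ first, then $\ref{it-qcont}\Rightarrow\ref{it-str-qcont}$ by way of the level-set description of $u$, and finally, when $X$ is locally compact, $\ref{it-str-qcont}\Leftrightarrow\ref{it-char}$. Throughout, recall that a quasiopen set is measurable. The direction $\ref{it-str-qcont}\Rightarrow\ref{it-qcont}$ is the routine one: it rests on the monotonicity $\CpU(E)\le\Cp(E)$ for $E\subset U$, valid because any $v\in\Np(X)$ with $v\ge1$ on $E$ restricts to $v|_U\in\Np(U)$ with $v|_U\ge1$ on $E$ and $\|v|_U\|_{\Np(U)}\le\|v\|_{\Np(X)}$ (the integrals only decrease and an upper gradient of $v$ on $X$ restricts to one of $v|_U$ along the curves lying in $U$). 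Hence, given $\eps>0$ and an open $G\subset X$ with $\Cp(G)<\eps$ on whose complement $u$ is finite and continuous, the relatively open set $G\cap U\subset U$ has $\CpU(G\cap U)\le\Cp(G)<\eps$ and $u|_{U\setm(G\cap U)}=u|_{U\setm G}$, so $u$ is $\CpU$-quasicontinuous.

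For $\ref{it-qcont}\Rightarrow\ref{it-str-qcont}$ I would first extract the level-set information. Fix $\al\in\R$ and $\eps>0$ and choose a relatively open $G\subset U$ with $\CpU(G)<\eps$ and $u|_{U\setm G}$ finite and continuous. Then $\{x\in U\setm G:u(x)>\al\}$ is relatively open in $U\setm G$, so it equals $W\cap(U\setm G)$ for some $W$ relatively open in $U$, and a direct check yields
\[
U_\al\cup G=\bigl(W\cap(U\setm G)\bigr)\cup G=(W\cup G)\cap U,
\]
which is relatively open in $U$. Thus $U_\al$, and likewise $V_\al$, is $\CpU$-quasiopen in $U$, hence quasiopen in $X$ by Proposition~\ref{prop-quasiopen-char}. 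Moreover $\{x\in U:|u(x)|=\infty\}$ is contained in every admissible $G$, so it has zero $\CpU$-capacity, and therefore zero $\Cp$-capacity by Proposition~\ref{prop-CpU}; that is, $u$ is finite q.e. So $\ref{it-qcont}$ already supplies all the data in $\ref{it-char}$, with no local compactness needed.

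It then remains, still without local compactness, to rebuild $\Cp$-quasicontinuity from this data. Enumerate $\Q=\{\al_k\}_k$. Given $\eps>0$, the quasiopenness of the $U_{\al_k}$, $V_{\al_k}$ and of $U$ furnishes open subsets of $X$ with $\Cp$-capacities summing to less than $\eps$, whose union $G_1$ then satisfies that $U_{\al_k}\cup G_1$, $V_{\al_k}\cup G_1$ and $U\cup G_1$ are all open. To $G_1$ one adjoins an open set of small $\Cp$-capacity containing $\{|u|=\infty\}\cap U$ — this is the delicate point, and here Proposition~\ref{prop-CpU} together with the relatively open $\CpU$-small sets coming from $\ref{it-qcont}$ are what make it work — obtaining the exceptional open set $G$. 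A bracketing argument finishes: for $x\in U\setm G$ with $u(x)$ finite and rationals $\al<u(x)<\beta$, the point $x$ lies in, hence is interior to, the open sets $U_\al\cup G_1$ and $V_\beta\cup G_1$, so a small ball about $x$ meets $U\setm G$ only inside $\{y:\al<u(y)<\beta\}$; letting $\al,\beta\to u(x)$ gives continuity of $u|_{U\setm G}$ at $x$.

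Finally, suppose $X$ is locally compact. Then $\ref{it-str-qcont}\Rightarrow\ref{it-char}$ is obtained just as the first step of $\ref{it-qcont}\Rightarrow\ref{it-str-qcont}$, now with $G$ open in $X$: with $\Omega$ open making $U\cup\Omega$ open, the same set computation shows $U_\al\cup(G\cup\Omega)$ is open, so $U_\al$ and $V_\al$ are quasiopen, and $u$ is finite q.e. For $\ref{it-char}\Rightarrow\ref{it-str-qcont}$ one runs the reconstruction of the previous paragraph; the single new difficulty is that $\{|u|=\infty\}\cap U$ is now known only to have zero $\Cp$-capacity and must still be enclosed in an \emph{open} set of small $\Cp$-capacity — an outer-regularity statement, and exactly where local compactness is used. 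I expect this enclosure of the set where $u$ is infinite — equivalently, the interplay between $\Cp$ and $\CpU$ on the nonopen set $U$ — to be the main obstacle of the proposition; the level-set manipulations and the bracketing argument, though a little fiddly, are essentially routine.
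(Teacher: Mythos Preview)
Your plan is workable, and for (ii)$\Rightarrow$(i), (ii)$\Rightarrow$(iii) and (iii)$\Rightarrow$(ii) it matches the paper. The real difference is in (i)$\Rightarrow$(ii): the paper does \emph{not} detour through the level-set description but instead reuses directly the explicit open-set construction from the proof of Proposition~\ref{prop-quasiopen-char} (the implication \ref{it-q-open-CpU}$\Rightarrow$\ref{it-q-open-Cp}). Taking relatively open $G_j\subset U$ with $\CpU(G_j)<2^{-j}j^{-p}\eps$ from (i), that construction yields a single open $H\subset X$ with $\Cp(H)\lesssim\eps$ and $H\supset G_k\cap\Om_k$ for every $k$; since each $x\in U\setm H$ lies in some $\Om_k$ and $u|_{U\setm G_k}$ is finite and continuous there, one reads off that $u|_{U\setm H}$ is finite and continuous in one stroke.

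Your level-set route can be completed, but the step you flag as delicate is genuinely underspecified as written. Knowing $\Cp(E)=0$ for $E=\{|u|=\infty\}$ (via Proposition~\ref{prop-CpU}) together with having relatively open $\CpU$-small neighbourhoods of $E$ does \emph{not} by itself produce an open $X$-neighbourhood of $E$ with small $\Cp$-capacity --- that is precisely the outer-regularity statement you correctly say needs local compactness when only (iii) is available. What actually saves you here is one further black-box appeal to Proposition~\ref{prop-quasiopen-char}: since $\CpU(E)=0$, the set $E$ is trivially $\CpU$-quasiopen in $U$ (use the $G_j$'s as the opening sets), hence quasiopen in $X$; so there is an open $H'$ with $\Cp(H')<\de$ and $E\cup H'$ open, and then $H:=E\cup H'$ is open, contains $E$, and $\Cp(H)\le\Cp(E)+\Cp(H')<\de$. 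Make this invocation explicit. In summary: the paper applies the Proposition~\ref{prop-quasiopen-char} machinery once and obtains finiteness and continuity simultaneously; your approach applies it as a black box once per rational level and once more for $E$, which is less economical but has the virtue of unifying (i)$\Rightarrow$(ii) with the (iii)$\Rightarrow$(ii) reconstruction. Note also that Proposition~\ref{prop-CpU} appears later in the paper, so your argument imports a forward reference (harmless, since its proof is independent of the present proposition).
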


\begin{remark} \label{rmk-qcont-char}
The local compactness assumption is only needed
when showing that there is an open neighbourhood
of $\{x:|u(x)|=\infty\}$ with small capacity when
proving \ref{it-char} \imp \ref{it-str-qcont}.
In particular 
\ref{it-qcont} \eqv \ref{it-str-qcont} \eqv \ref{it-char}
holds without this assumption
for real-valued functions $u:U \to \R$.
\end{remark}

\begin{example}
The equivalence between the two types of
quasicontinuity does not hold for arbitrary measurable subsets:
To see this let e.g.\ $U=(\R \setm \Q) ^2$ as a subset of (unweighted)
$\R^2$ and $u=\chi_{[0,\sqrt{2}]^2}$.
Then any relatively open set $G \subset U$ such that $u|_{U \setm G}$ is continuous
must contain at least one point on each
line $\{(x,y):x=t\}$ for $0 < t < \sqrt{2}$, $t \notin \Q$.
It follows, by projection,  that 
$\Cp(G)\ge \Cp(([0,\sqrt{2}]\setm\Q) \times \{0\})>0$
and thus $u$ is not $\Cp$-quasicontinuous.
On the other hand,
let 
$E=U \cap (([0,\sqrt{2}]\times\{\sqrt{2}\}) \cup (\{\sqrt{2}\} \times [0,\sqrt{2}]))$.
Then $u|_{U \setm E}$ is continuous.
Since $\mu(E)=0$,
the regularity of the measure shows that, for every $\eps>0$,
there is a relatively open subset $G$ of $U$ such that $E \subset G$
and $\mu(G)< \eps$.
As there are no nonconstant curves in $U$, we have $\CpU(G)=\mu(G)$,
and thus $u$ is $\CpU$-quasicontinuous.
\end{example}

In Fuglede~\cite[Lemma~3.3]{Fugl71}, a characterization similar to \ref{it-char}
was obtained for general capacities on topological spaces.
The definitions of quasiopen sets and quasicontinuity therein differ however
somewhat from ours.
More precisely, in~\cite{Fugl71}, a set $V$ is called quasiopen if for
every $\eps>0$ there exists an open set $\Om$ such that the symmetric
difference $(V\setm\Om)\cup(\Om\setm V)$ has capacity less than $\eps$.
If the capacity is outer then this notion is easily shown to be equivalent 
to our definition, but in general Fuglede's definition allows for more
quasiopen sets.

Similarly, in the definition of quasicontinuity in~\cite{Fugl71}, it is
not in general required that the removed exceptional set $G$ be open 
(though for outer capacities this can always be arranged) and continuity 
does not require finiteness.
In other words, Fuglede's definition of quasicontinuity corresponds to the 
\emph{weak quasicontinuity} considered (on open sets) in 
Bj\"orn--Bj\"orn~\cite[Section~5.2]{BBbook}, less the requirement that 
continuous functions be finite.
Fuglede's notion of quasicontinuity is in~\cite[Lemma~3.3]{Fugl71} proved
to be equivalent to the fact that the sets $U_\alp$ and 
$V_\alp$, $\alp \in \R$, in Proposition~\ref{prop-qcont-char} are quasiopen 
(in the sense of~\cite{Fugl71}).

Since our definitions do not exactly agree with those in~\cite{Fugl71},
and moreover we consider quasicontinuity and quasiopenness with respect to 
two different capacities ($\Cp$ and $\CpU$) and two possible underlying
spaces ($X$ and $U$), we present here for the reader's convenience full
proofs of these results.

The proof of Proposition~\ref{prop-qcont-char}
can be easily modified to show that on quasiopen sets
weak quasicontinuity is the same with respect to $\Cp$ and $\CpU$.

\begin{proof}[Proof of Proposition~\ref{prop-quasiopen-char}.]
\ref{it-q-open-Cp} \imp \ref{it-q-open-Cp-U}
For every $\eps>0$ there exists an open $G\subset X$ such that $\Cp(G)<\eps$
and $\Om:=V\cup G$ is open. 
Then $\Gt:=G\cap U$ and $\Omt:=\Om\cap U$ are relatively open in $U$ and
$\Cp(\Gt)<\eps$, i.e.\ \ref{it-q-open-Cp-U} holds.

\ref{it-q-open-Cp-U} \imp \ref{it-q-open-CpU}
This is straightforward, since $\CpU$ is majorized by $\Cp$.

\ref{it-q-open-CpU} \imp \ref{it-q-open-Cp}
We may assume that $\emptyset \ne U \ne X$.
Let $\eps >0$ be arbitrary. 
Since $U$ is quasiopen, there is an open set $G$ such 
that $\Cp(G)<\eps$ and such that $\Om:=U \cup G \ne X$ is open.
Hence there is $w \in \Np(X)$ such that $\chi_G \le w \le 1$
and $\|w\|_{\Np(X)}^p < \eps$.

Using  that
$V$ is $\CpU$-quasiopen,
we can find, for each $j=1,2,\ldots$,
a relatively open set $G_j \subset U$ such that
$V\cup G_j$ is relatively open in $U$ and
$\CpU(G_j)<\eps_j:=2^{-j}j^{-p}\eps$.
There is thus $v_j \in \Np(U)$ such that $\chi_{G_j}  \le v_j \le 1$ in $U$
and $\|v_j\|_{\Np(U)}^p < \eps_j$.
Next, let $x_0 \in \Om$,
\[ 
    \Om_j=\{x \in \Om : \dist(x,X \setm \Om)> 1/j
    \text{ and } d(x,x_0) < j\}, 
    \quad j=1,2,\ldots,
\]
and $\eta_j(x)=(1-j\dist(x,\Om_j))_\limplus$.
(If $\Om_j=\emptyset$ we let $\eta_j \equiv 0$.)
Define the functions 
\[
    \phi_k = \begin{cases}
     \displaystyle
     \min \Bigl\{ 1-w, \max_{1 \le j \le k} v_j \eta_j \Bigr\} & \text{in } U, \\
     0, & \text{otherwise},
     \end{cases}
      \quad \text{and} \quad
      \phi=\lim_{k \to \infty} \phi_k.
\]
Then $\phi_k$ has bounded support and 
$\phi_k \in \Np(U)$.
Since 
\[
0 \le \phi_k \le\min\{1- w,\eta_k\} \in \Np_0(U)
:=\{f: f \in \Np(X) \text{ and } f=0 \text{ outside } U\},
\]
Lemma~2.37 in \cite{BBbook} 
implies that $\phi_k \in \Np_0(U) \subset \Np(X)$.
Next,
\begin{equation} \label{eq-phik}
     \int_X \phi_k^p \, d\mu 
     \le \sum_{j=1}^{k}      \int_X v_j^p \, d\mu 
     < \sum_{j=1}^{k}      \eps_j 
     < \eps.
\end{equation}
By Lemma~1.52 in \cite{BBbook}, 
$g:=\sup_k g_{\phi_k}$
is a \p-weak upper gradient of $\phi$.
For a.e.\ $x \in U$ we either
have $g=g_{\phi_k}=g_w$ or $g=g_{\phi_k}=g_{v_j \eta_j} \le g_{v_j} + jv_j$ for
some $j$ and $k$
(see \cite[Theorem~2.15 and Corollary~2.21]{BBbook}).
Hence
\[
    \int_X g^p \, d\mu 
     \le     \int_X g_{w}^p \, d\mu 
            + \sum_{j=1}^{\infty} 2^{p-1} \int_U (g_{v_j}^p + j^p v_j^p)  \, d\mu 
      \le \eps + \sum_{j=1}^{\infty} 2^{p-1} j^p \eps_j 
     < \eps + 2^p\eps,
\]
which together with \eqref{eq-phik} shows that
$\|\phi\|_{\Np(X)}^p \le 2\eps+2^p \eps$.

Next, set
\[
     H= G \cup \bigcup_{j=1}^\infty (G_j \cap \Om_j),
\]
which is open by the choice of $G$ as $\bigcup_{j=1}^\infty (G_j \cap \Om_j)$
is relatively open in $U$.
Then $w+\phi\ge\chi_H$ and hence
\[
   \Cp(H) \le \| w+\phi \|_{\Np(X)}^p 
     \le 2^{p-1} (\|w\|_{\Np(X)}^p + \|\phi\|_{\Np(X)}^p)
     \le 2^{p-1}(3\eps + 2^p\eps).
\]
It remains to show that $V\cup H$ is open in $X$.
If $x\in V$, then $x\in\Om_k$ for some $k$, and as $V\cup G_k$ is 
relatively open in $U$ (and thus $G\cup V\cup G_k$ is open in $X$), 
we can find a ball $B_x$ such that 
\[
x\in B_x\subset (G\cup V\cup G_k) \cap \Om_k \subset V\cup G\cup(G_k\cap\Om_k)
\subset V\cup H.
\]
For $x\in H$, we can instead choose $x\in B_x\subset H \subset V\cup H$.
\end{proof}

\begin{proof}[Proof of Proposition~\ref{prop-qcont-char}.]
\ref{it-str-qcont} \imp \ref{it-qcont}
This is straightforward, since $\CpU$ is majorized by $\Cp$.

\ref{it-qcont} \imp \ref{it-str-qcont}
We may assume that $\emptyset \ne U \ne X$.
Let $\eps >0$ be arbitrary. 
For each $j=1,2,\ldots$, 
there is a relatively open set $G_j \subset U$ such that
$\CpU(G_j)<\eps_j:=2^{-j}j^{-p}\eps$ 
and such that $u|_{U \setm G_j}$ is finite and continuous in $U$.

Next construct the open sets $G$, $H$ and $\Om_k$, $k=1,2,\ldots$,  as in the proof of
Proposition~\ref{prop-quasiopen-char}, \ref{it-q-open-CpU} \imp \ref{it-q-open-Cp}.
Then $\Cp(H) \le 2^{p-1}(3\eps + 2^p\eps)$. 
If
$x \in U \setm H$,
then there is $k$ such that $x \in \Om_k$.
Since $u|_{U \setm G_k}$ is finite and continuous at $x$, it
thus follows that
$u|_{U \setm H}$ is finite and continuous at $x$.
Hence $u|_{U \setm H}$ is finite and continuous,
and $u$ is $\Cp$-quasicontinuous.

\medskip
Now assume that $X$ is locally compact.

\ref{it-str-qcont} \imp \ref{it-char}
Let $\eps>0$.
Since $U$ is quasiopen we can find an open set $G$ such that
$G \cup U$ is open and $\Cp(G) <\eps$.
As $u$ is $\Cp$-quasicontinuous,
 there is an open set $H \subset X$ such that $\Cp(H) <\eps$
and $u|_{U \setm H}$ is continuous.
Thus, for every $\alp \in \R$,
$U_\alp \setm H$ is relatively open in $U \setm H$.
Hence $U_\alp \cup (G \cup H)$ is open and $\Cp(G \cup H)< 2\eps$,
showing that $U_\alp$ is quasiopen.
That $V_\alp$ is quasiopen follows similarly, whereas
$u$ is finite q.e.\ by definition.

\ref{it-char} \imp \ref{it-str-qcont}
Let $\eps>0$ and $E=\{x \in U : |u(x)|=\infty\}$.
By assumption, $\Cp(E)=0$ and thus
there is an open set $H \supset E$ such that $\Cp(H)<\eps$,
by Proposition~1.4 in Bj\"orn--Bj\"orn--Shanmugalingam~\cite{BBS5} and 
Proposition~4.7  in Bj\"orn--Bj\"orn--Lehrb\"ack~\cite{BBLeh1}.
Next let $\{q_j\}_{j=1}^\infty$ be an enumeration of $\Q$.
By assumption there are open $G_j$ such that $U_{q_j} \cup G_j$ 
and $V_{q_j} \cup G_j$ are open
and $\Cp(G_j) < 2^{-j} \eps$.
Let $G=H \cup \bigcup_{j=1}^\infty G_j$, which is open and such that
$\Cp(G) < 2\eps$.
Moreover, $U_{q_j} \setm G$ is relatively open in $U \setm G$.
For $\alp \in \R$ it follows that
\[
  U_\alp \setm G = \bigcup_{\Q\ni q > \alp}
(U_q \setm G)
\]
is relatively open in $U \setm G$.
Similarly $V_\alp \setm G$ is relatively open in $U \setm G$, 
and thus $u|_{U \setm G}$ is finite and continuous.
\end{proof}

Theorem~\ref{thm-p-path-quasiopen}
shows that under certain assumptions on $X$, \p-path open
sets are quasiopen, and thus 
Propositions~\ref{prop-quasiopen-char} 
and~\ref{prop-qcont-char}
hold for \p-path open sets in that case. 
In general, \p-path open sets need not be measurable, see
Example~\ref{ex-qcont-not-p-path=qopen} below.
It would therefore be interesting to know if 
the conclusions of Propositions~\ref{prop-quasiopen-char} 
and~\ref{prop-qcont-char}
hold for measurable \p-path open sets $U$ 
(or even measurable \p-path almost open sets $U$, see 
Bj\"orn--Bj\"orn~\cite{BBnonopen})
without additional assumptions on $X$.
In Section~\ref{sect-no-PI} some partial results are obtained
for \p-path open sets which are Borel.
Note that in the situation described in 
Example~\ref{ex-qcont-not-p-path=qopen} below, the conclusions of
Propositions~\ref{prop-quasiopen-char} and~\ref{prop-qcont-char}
do hold for measurable \p-path open sets.

\section{\texorpdfstring{\p-path}{path} open and quasiopen sets}
\label{sect-p-path-open}

\begin{deff}
\label{def-p-path-open}
A set $G\subset X$ is \p-\emph{path open}
(in $X$) if for \p-almost every curve
$\ga:[0,l_\ga]\to X$, the set $\ga^{-1}(G)$ 
is\/ \textup{(}relatively\/\textup{)} open in $[0,l_\ga]$.
\end{deff}

The arguments proving
Propositions~\ref{prop-quasiopen-char} and~\ref{prop-qcont-char} can also be used to 
show that $\Cp$ and $\CpU$ have the same zero sets for quasiopen $U$.
Using a different approach we can 
obtain this
more generally for \p-path open sets.

\begin{prop} \label{prop-CpU}
Let $U$ be a measurable \p-path open set and $E \subset U$.
Then
 $\Cp(E)=0$ if and only if $\CpU(E)=0$.
\end{prop}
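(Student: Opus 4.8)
The plan is to prove the two implications separately, with the nontrivial direction being that $\CpU(E)=0$ implies $\Cp(E)=0$. The easy direction follows immediately since $\CpU$ is majorized by $\Cp$: if $u\in\Np(X)$ with $u\ge 1$ on $E$, then $u|_U\in\Np(U)$ (restriction does not increase the Newtonian norm), so $\CpU(E)\le\Cp(E)$, and hence $\Cp(E)=0$ gives $\CpU(E)=0$.

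For the converse, suppose $\CpU(E)=0$. First I would recall the standard fact (e.g.\ from \cite{BBbook} or \cite{Sh-rev}) that a set has zero capacity if and only if it is contained in a set of measure zero which is not hit by \p-almost every curve; more precisely, $\CpU(E)=0$ means $\mu(E)=0$ and the curve family $\Gamma_E^U$ of curves in $U$ meeting $E$ (on a set of positive length, or merely meeting $E$ — the two notions coincide up to \p-null families) has zero \p-modulus with respect to $U$. So there is a nonnegative $\rho\in L^p(U)$, which I extend by zero to all of $X$, with $\int_\ga\rho\,ds=\infty$ for every curve $\ga$ lying in $U$ that meets $E$. The goal is to produce a single $L^p(X)$ function witnessing that the family of \emph{all} curves in $X$ meeting $E$ has zero \p-modulus, which (together with $\mu(E)=0$) gives $\Cp(E)=0$.

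The key step is to handle curves in $X$ that meet $E$ but are not entirely contained in $U$. Here is where \p-path openness and measurability of $U$ enter. Since $U$ is \p-path open, for \p-a.e.\ curve $\ga:[0,l_\ga]\to X$ the set $\ga^{-1}(U)$ is relatively open in $[0,l_\ga]$; discard the exceptional family (zero modulus). If such a $\ga$ meets $E\subset U$ at a parameter $t_0$, then $t_0$ lies in the open set $\ga^{-1}(U)$, so there is a maximal subinterval $(a,b)\ni t_0$ of $\ga^{-1}(U)$, and the subcurve $\ga|_{[a',b']}$ for $a<a'<t_0<b'<b$ is a curve lying in $U$ that meets $E$; thus $\int_{\ga|_{(a,b)}}\rho\,ds=\infty$, whence $\int_\ga\rho\,ds=\infty$. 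Consequently, adding $\rho$ (extended by zero) to an $L^p$ witness for the exceptional \p-path-open curve family yields a function in $L^p(X)$ that is infinite on every curve in $X$ meeting $E$. Combined with $\mu(E)=0$, this shows $\Cp(E)=0$.

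The main obstacle, and the point requiring care, is the precise relationship between "curve meets $E$" and "curve meets $E$ on a set of positive length," and verifying that the relevant zero-modulus characterization of capacity is available in this generality (arbitrary $X$ with finite measure on balls, $U$ merely measurable). One must also be slightly careful that the subcurve $\ga|_{[a',b']}$ is a genuine compact rectifiable curve parameterized by arc length and that cutting off the endpoints does not destroy the property of meeting $E$ — this is fine because $t_0$ is interior to $(a,b)$, so $t_0\in(a',b')$ for $a',b'$ chosen close enough to $a,b$. Finally, since $U$ is measurable, $\rho$ restricted/extended between $U$ and $X$ stays measurable and in $L^p$, so no measurability pathology arises. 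All remaining estimates are routine subadditivity of modulus.
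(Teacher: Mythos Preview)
Your proposal is correct and follows essentially the same route as the paper's proof: the paper cites Proposition~1.48 in \cite{BBbook} for the zero-capacity characterization ($\mu(E)=0$ and \p-a.e.\ curve avoids $E$) and Lemma~3.3 in \cite{BBLat3} for the subcurve argument passing from curves in $U$ to curves in $X$ via \p-path openness, while you unpack exactly these two ingredients by hand. Your residual worry about ``meets $E$'' versus ``meets $E$ on a set of positive length'' is immaterial here, since the relevant characterization uses the former notion and holds in full generality.
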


For open $U$ this is well-known, see e.g.\  Lemma~2.24 in 
Bj\"orn--Bj\"orn~\cite{BBbook}.

\begin{proof}
Assume that $\CpU(E)=0$.
Proposition~1.48 in \cite{BBbook}, applied with $U$ as
the underlying space, implies that $\mu(E)=0$ and that \p-almost every
curve in $U$ avoids $E$.
Since $U$ is \p-path open, it follows that also \p-almost every curve in $X$
avoids $E$ (see Lemma~3.3 in Bj\"orn--Bj\"orn--Latvala~\cite{BBLat3}).
Thus, by Proposition~1.48 in \cite{BBbook} again (this time
with respect to $X$), $\Cp(E)=0$.

The converse implication is trivial.
\end{proof}

The \p-path open sets were introduced by 
Shanmugalingam~\cite[Remark~3.5]{Sh-harm}. 
It was also shown there that every quasiopen set is \p-path open.
We are now going to prove Theorem~\ref{thm-p-path-quasiopen}
which says that the converse is true
under suitable assumptions on $X$.
In particular, this
holds in $\R^n$.

\begin{proof}[Proof of Theorem~\ref{thm-p-path-quasiopen}]
Let $U\subset X$ be \p-path open.
Then the family $\Gamma$ of curves $\ga$ in $X$, for which $\ga^{-1}(U)$
is not relatively open, has zero \p-modulus, i.e.\ there exists 
$\rho\in L^p(X)$ such that $\int_\ga \rho\,ds=\infty$ for every $\ga\in\Gamma$.

Assume to start with that $U$ is bounded and let $B$ be a ball containing
$U$.
Define, for $x\in X$,
\begin{equation}  \label{eq-def-u-from-rho}
u(x) = \min \biggl\{1, \inf_\ga \int_\ga (\rho+\chi_B)\,ds \biggr\},
\end{equation}
where $\chi_B$ is the characteristic function of $B$ and
the infimum is taken over all rectifiable curves $\ga: [0,l_\ga] \to X$ 
(including constant
curves) such that $\ga(0)=x$ and $\ga(l_\ga)\in X\setm U$.
Then $u=0$ in $X\setm U$ and
Lemma~3.1 in Bj\"orn--Bj\"orn--Shanmugalingam~\cite{BBS5} (or 
\cite[Lemma~5.25]{BBbook})
shows that $u$ has $\rho+\chi_B$ as an upper gradient.
Since the measure $\mu$ is doubling and $X$ is complete
and supports a \p-Poincar\'e inequality, we can conclude from Theorem~1.11
in J\"arvenp\"a\"a--J\"arvenp\"a\"a--Rogovin--Rogovin--Shanmugalingam~\cite{JJRRS}
that $u$ is measurable.
As $U$ is assumed to be bounded and $\rho\in L^p(X)$, 
it follows that $u\in\Np(X)$.

We claim that $u>0$ in $U$, i.e.\ that $U=\{x\in X:u(x)>0\}$ is a superlevel
set of a Newtonian function.
The assumptions on $X$ guarantee
that $u$ is quasicontinuous (by 
\cite[Theorem~1.1]{BBS5} or \cite[Theorem~5.29]{BBbook}), 
which then directly implies that $U$ is quasiopen, 
see Proposition~\ref{prop-qcont-char}.

To prove the claim, 
let $x\in U$ 
and assume for a contradiction that $u(x)=0$.
Then there exist curves $\ga_j$ connecting $x$ to $X\setm U$ such that
\begin{equation}  \label{eq-choose-ga-j}
\int_{\ga_j} (\rho+\chi_B)\,ds \le 2^{-j}, \quad j=1,2,\ldots.
\end{equation}
In particular, $l_{\ga_j}\le 2^{-j}$ for all $j=1,2,\ldots$\,.

We define a curve $\gat$ as a recursive concatenation of all $\ga_j$ and
their reversed reparameterizations as follows.
Let $L_0=0$,
\[
L_j=2\sum_{i=1}^j l_{\ga_i} \le 2\sum_{i=1}^\infty l_{\ga_i} =: L
\le 2\sum_{j=1}^\infty 2^{-j} =2
\] 
and
\[
\gat(t) = \begin{cases}
          \ga_j (t-L_{j-1}), 
          & \text{if } L_{j-1} \le t \le L_{j-1} + l_{\ga_j}, \ j=1,2,\ldots,\\
          \ga_j (L_j-t), 
          & \text{if } L_{j-1} + l_{\ga_j} \le t \le L_j, \ j=1,2,\ldots.
          \end{cases}
\]
Then $\gat:[0,L]\to X$, $\gat(L)=x$ and 
$\gat(L_{j} + l_{\ga_{j+1}})\in X\setm U$, $j=1,2,\ldots$\,.
Since $x\in U$ and $L_j + l_{\ga_{j+1}} \to L$, as $j\to\infty$, this shows that 
$\gat^{-1}(U)$ is not relatively open in $[0,L]$
and hence $\gat\in\Gamma$.
But \eqref{eq-choose-ga-j} implies that
\[
\int_\gat \rho \,ds \le 2 \sum_{j=1}^\infty \int_{\ga_j} (\rho+\chi_B)\,ds 
\le 2\sum_{j=1}^\infty 2^{-j} =2,
\]
contradicting the choice of $\rho$.
We can therefore conclude that $u(x)>0$ for all $x\in U$, which finishes
the proof for bounded $U$.

If $U$ is unbounded, then the above argument shows that $U\cap B_j$ is 
quasiopen, $j=1,2,\ldots$, where $\{B_j\}_{j=1}^\infty$ is a countable cover
of $X$ by (open) balls.
In particular, given $\eps>0$, there exists for every $j=1,2,\ldots$ an open
set  $G_j$ such that $\Cp(G_j)<2^{-j}\eps$ and $(U\cap B_j)\cup G_j$
is open. 
Setting $G=\bigcup_{j=1}^\infty G_j$, we see that $\Cp(G)<\eps$ and
$U\cup G = \bigcup_{j=1}^\infty ((U\cap B_j)\cup G_j)$ is open, 
which concludes the proof.
\end{proof}

We now turn to Theorem~\ref{thm-qcont=>composition-intro} and 
restate it in a more general form.

\begin{prop} \label{prop-qcont=>composition} 
Assume that $X$ is locally compact and that every 
measurable \p-path open set in $X$ is
quasiopen.
Let $u:U \to \eR$ be a function on a quasiopen set~$U$. 
Then the following are equivalent\/\textup{:}
\begin{enumerate}
\item \label{a-1}
$u$ is quasicontinuous\/ \textup{(}with respect to $\Cp$ or $\CpU$\textup{)}\textup{;}
\item \label{a-2}
$u$ is measurable and finite q.e., and 
$u\circ\ga$ is continuous\/ 
\textup{(}on the set where it is defined\/\textup{)}
for \p-a.e.\ curve $\ga:[0,l_\ga]\to X$\textup{;}
\item \label{a-3}
$u$ is measurable and finite q.e., and 
$u\circ\ga$ is continuous for \p-a.e.\ curve $\ga:[0,l_\ga]\to U$.
\end{enumerate}
\end{prop}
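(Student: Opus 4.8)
The plan is to establish the cycle $\ref{a-1}\Rightarrow\ref{a-2}\Rightarrow\ref{a-3}\Rightarrow\ref{a-1}$, using the hypothesis that measurable \p-path open sets are quasiopen precisely in the last implication, and using local compactness only through Proposition~\ref{prop-qcont-char}. First I would note that by Proposition~\ref{prop-qcont-char} the two notions of quasicontinuity coincide on the quasiopen set $U$, so in \ref{a-1} it does not matter which capacity we use; I will work with $\Cp$-quasicontinuity.

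For $\ref{a-1}\Rightarrow\ref{a-2}$: a quasicontinuous function is measurable and (by definition) finite q.e. To get the curve statement, fix $\eps>0$ and an open set $G\subset X$ with $\Cp(G)<\eps$ such that $u|_{U\setm G}$ is finite and continuous. Since $\Cp(G)<\eps$, there is $w\in\Np(X)$ with $w\ge 1$ on $G$ and small norm, and by Proposition~1.37 (or the analogous modulus estimate) in \cite{BBbook}, \p-a.e.\ curve $\ga$ in $X$ has $\int_\ga g_w\,ds<\infty$ and $u\circ\ga$ well-behaved off $G$; more directly, one uses that the curve family meeting $G$ on a non-open subset has small \p-modulus, together with the fact that $U$ is \p-path open (so $\ga^{-1}(U)$ is relatively open for \p-a.e.\ $\ga$). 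Intersecting over $\eps=2^{-k}$ and using subadditivity of \p-modulus, for \p-a.e.\ curve $\ga:[0,l_\ga]\to X$ we obtain: $\ga^{-1}(U)$ is open, and for each $k$ the restriction of $u$ to $\ga^{-1}(U)\setm\ga^{-1}(G_k)$ is continuous while $\ga^{-1}(G_k)$ has length tending to $0$; a routine argument then shows $u\circ\ga$ is continuous on $\ga^{-1}(U)$, which is exactly \ref{a-2}. The implication $\ref{a-2}\Rightarrow\ref{a-3}$ is immediate, since a curve $\ga:[0,l_\ga]\to U$ is in particular a curve in $X$ with $\ga^{-1}(U)=[0,l_\ga]$, and the family of curves in $U$ that fail the continuity property has \p-modulus (computed in $U$) zero, hence, since $U$ is \p-path open, \p-modulus zero in $X$ as well — here one may invoke Lemma~3.3 in \cite{BBLat3} exactly as in the proof of Proposition~\ref{prop-CpU}.

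The substance of the argument, and the step I expect to be the main obstacle, is $\ref{a-3}\Rightarrow\ref{a-1}$. Here the strategy is to show that each superlevel set $U_\alp=\{x\in U:u(x)>\alp\}$ and sublevel set $V_\alp$ is \p-path open \emph{in $X$}, and then quote the hypothesis (measurable \p-path open $\Rightarrow$ quasiopen) together with Proposition~\ref{prop-qcont-char}\ref{it-char} to conclude that $u$ is $\CpU$-quasicontinuous. To see that $U_\alp$ is \p-path open in $X$: for \p-a.e.\ curve $\ga:[0,l_\ga]\to X$ the set $\ga^{-1}(U)$ is relatively open (since $U$ is quasiopen, hence \p-path open by Shanmugalingam's result), and restricting $\ga$ to each component of $\ga^{-1}(U)$ gives a curve into $U$; discarding a further \p-exceptional family we may assume $u\circ\ga$ is continuous on each such component, so $(u\circ\ga)^{-1}((\alp,\infty])=\ga^{-1}(U_\alp)$ is relatively open in $\ga^{-1}(U)$, hence relatively open in $[0,l_\ga]$. (One must check that this exceptional family is genuinely \p-null: the curves into $U$ arising as sub-curves of a fixed curve in $X$ form a family controlled by the \p-modulus in $X$, so restricting an $L^p(U)$ admissible function via zero-extension, or invoking the \p-path openness of $U$ to pass between moduli, handles this.) Measurability of $U_\alp$ and $V_\alp$ follows since $u$ is measurable by assumption. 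Finally one applies Proposition~\ref{prop-qcont-char}, whose hypotheses (local compactness, $U$ quasiopen) are in force, to deduce from \ref{it-char} that $u$ is quasicontinuous. The delicate point throughout is the bookkeeping of \p-exceptional curve families — in particular ensuring that "for \p-a.e.\ curve into $U$" statements and "for \p-a.e.\ curve into $X$" statements can be transferred back and forth; this is exactly where \p-path openness of $U$ (and of the level sets) is used, via the transference lemma of \cite{BBLat3}.
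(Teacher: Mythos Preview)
Your cycle is the paper's, and the key implication \ref{a-3}$\Rightarrow$\ref{a-1} is handled correctly and essentially as in the paper: level sets are measurable and \p-path open, hence quasiopen by hypothesis, and Proposition~\ref{prop-qcont-char} closes the loop. The paper merely splits this as \ref{a-3}$\Rightarrow$\ref{a-2}$\Rightarrow$\ref{a-1}, isolating the subcurve step (Lemma~1.34 in \cite{BBbook}) that you also flag.

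The gap is in your \ref{a-1}$\Rightarrow$\ref{a-2}. Even granting (via Fuglede's lemma, along a subsequence) that $|\gamma^{-1}(G_k)|\to 0$ for \p-a.e.\ curve, the ``routine argument'' does not go through: continuity of $u\circ\gamma$ on the complements of open sets $O_k$ with $|O_k|\to 0$ does \emph{not} imply continuity of $u\circ\gamma$. Take $f(0)=0$, $f(t)=\sin(1/t)$ on $(0,1]$, and $O_k=(0,1/k)$; then $f$ is continuous on $\{0\}\cup[1/k,1]$ for every $k$ and $|O_k|\to 0$, yet $f$ is discontinuous at $0$. A correct repair is to argue instead that \p-a.e.\ curve avoids $E:=\bigcap_k G_k$ (since $\Cp(E)=0$), so that each $t_0\in\gamma^{-1}(U)$ has a full neighbourhood mapped by $\gamma$ into some $U\setminus G_k$, giving local continuity. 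The paper sidesteps all of this: it applies Proposition~\ref{prop-qcont-char} to get that the level sets $U_\alpha$, $V_\alpha$ are quasiopen, hence \p-path open by \cite[Remark~3.5]{Sh-harm}, so for \p-a.e.\ $\gamma$ the sets $\gamma^{-1}(U_q)$, $\gamma^{-1}(V_q)$, $q\in\Q$, are all relatively open, which immediately yields continuity of $u\circ\gamma$.
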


The assumptions of Proposition~\ref{prop-qcont=>composition} 
are guaranteed by Theorem~\ref{thm-p-path-quasiopen}, 
but there are also other situations when they hold,
see e.g.\ Example~\ref{ex-qcont-not-p-path=qopen} below and 
Example~5.6 in \cite{BBbook}.

As seen from the proof below and Remark~\ref{rmk-qcont-char}, 
no assumptions on $X$ are needed for the 
implications \ref{a-1} \imp \ref{a-2} \eqv \ref{a-3}
in Proposition~\ref{prop-qcont=>composition}.
Also, if we know that \emph{every} \p-path open set is quasiopen 
(and thus measurable),
then the measurability assumption for $u$ can be dropped,
cf.\ the proof of Theorem~\ref{thm-p-path-quasiopen} where measurability of $u$
follows from Theorem~1.11
in J\"arvenp\"a\"a--J\"arvenp\"a\"a--Rogovin--Rogovin--Shanmugalingam~\cite{JJRRS}. 

For real-valued $u$, the assumption of local compactness  
here and in 
Corollary~\ref{cor-all-Np-qcont} below
can be omitted, see Remark~\ref{rmk-qcont-char}.

\begin{proof}
\ref{a-1} \imp \ref{a-2}
Assume that $u:U \to \eR$ is quasicontinuous. 
Proposition~\ref{prop-qcont-char} shows that it is finite q.e.\ and
the sets $U_\alp:=\{x\in U:u(x)>\al\}$ and 
$V_\alp:=\{x\in U:u(x)<\al\}$, $\alp \in \R$, are quasiopen.
Remark~3.5 in Shanmugalingam~\cite{Sh-harm} implies that $U$,  $U_\al$ and 
$V_\al$, $\al\in\R$, are \p-path open.
Hence, for \p-a.e.\ curve $\ga:[0,l_\ga]\to X$, the set $I_\ga=\ga^{-1}(U)$
is a relatively open subset of $[0,l_\ga]$, and so are the level sets
\begin{align}  \label{eq-level-sets-a.e.-curve}
\ga^{-1}(U_\al) &= \{t\in I_\ga: (u\circ\ga)(t)>\al\}
\quad \text{and} \quad
\ga^{-1}(V_\al) = \{t\in I_\ga: (u\circ\ga)(t)<\al\}, 
\end{align}
for all $\al\in\Q$.
This implies that $u\circ\ga: I_\ga\to\eR$ is continuous for \p-a.e.\ curve $\ga$.

\ref{a-2} \imp \ref{a-3} This is trivial.

\ref{a-3} \imp \ref{a-2}
Let $\Ga$ be the family of exceptional curves $\ga:[0,l_\ga] \to U$ for which 
$u\circ\ga$ is not continuous.
As $U$ is quasiopen, and thus \p-path open  by Remark~3.5 in~\cite{Sh-harm},
$\ga^{-1}(U)$ is open for \p-a.e.\ curve $\ga:[0,l_\ga] \to X$.
By Lemma~1.34 in \cite{BBbook}, \p-a.e.\ such curve $\ga$
does not have a subcurve in $\Ga$.
For such a $\ga$, the composition $u \circ \ga$ is continuous
on the relatively open subset of $[0,l_\ga]$ where it is defined.

\ref{a-2} \imp \ref{a-1}
Since $U$ is quasiopen, and thus \p-path open by Remark~3.5 in~\cite{Sh-harm},
for \p-a.e.\ curve $\ga:[0,l_\ga]\to X$, the set
$I_\ga=\ga^{-1}(U)$ is a relatively open subset 
of $[0,l_\ga]$ and $u\circ\ga$ is continuous on $I_\ga$.
For such $\ga$, and all $\al \in \R$, the level sets 
in~\eqref{eq-level-sets-a.e.-curve} are 
relatively open in $I_\ga$, and thus in $[0,l_\ga]$.
It follows that the level sets $U_\alp:=\{x\in U:u(x)>\al\}$ and 
$V_\alp:=\{x\in U:u(x)<\al\}$, $\alp \in \R$, are \p-path open
and measurable (as $u$ is measurable), and thus
quasiopen by the assumption.
Proposition~\ref{prop-qcont-char} now concludes the proof.
\end{proof}

\begin{proof}[Proof of Theorem~\ref{thm-qcont=>composition-intro}]
Theorem~\ref{thm-p-path-quasiopen} guarantees that the
assumptions of Proposition~\ref{prop-qcont=>composition}
are satisfied, from which the result follows.
\end{proof}

As a consequence of the characterization 
in Proposition~\ref{prop-qcont=>composition}
we obtain the following 
generalization of Theorem~\ref{thm-qcont-U} and 
partial converse
of Theorem~\ref{thm-p-path-quasiopen-no-PI-intro}.

\begin{cor}  \label{cor-all-Np-qcont}
Assume that $X$ is locally compact and that every 
measurable \p-path open set in $X$ is
quasiopen.
Let $U \subset X$ be quasiopen.
Then all $u\in\Nploc(U)$ are quasicontinuous.
\end{cor}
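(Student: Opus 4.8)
The plan is to derive the corollary as an immediate application of Proposition~\ref{prop-qcont=>composition}. The standing hypotheses of the corollary---that $X$ is locally compact and that every measurable \p-path open set in $X$ is quasiopen---are precisely those of that proposition, and $U$ is quasiopen, so for a given $u\in\Nploc(U)$ it suffices to verify condition~\ref{a-3} there: that $u$ is measurable and finite q.e., and that $u\circ\ga$ is continuous for \p-a.e.\ curve $\ga:[0,l_\ga]\to U$. Measurability is immediate, since $\Nploc(U)$ consists of measurable functions by definition. So the work is in the remaining two properties, and the idea is to reduce them to the corresponding well-known facts for genuine Newtonian functions by localizing.

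First I would fix a cover $\{V\}$ of $U$ by relatively open subsets on each of which $u$ coincides with a function in $\Np(V)$; this is what membership in $\Nploc(U)$ supplies (if $\Nploc$ is taken in the sense of compact containment, local compactness of $X$ produces such a cover by sets of the form $B\cap U$ with $B$ a small ball). Each such $V$ is $\CpU$-quasiopen in $U$ (trivially, with the exceptional set empty), hence quasiopen in $X$ by Proposition~\ref{prop-quasiopen-char}, and therefore measurable and \p-path open. Proposition~\ref{prop-CpU}, applied with $V$ in place of $U$, then shows that $\Cp$ and $C_p^V$ have the same null sets inside $V$. On the metric space $V$ the function $u\in\Np(V)$ enjoys the standard Newtonian properties: it is $C_p^V$-finite q.e., and it is absolutely continuous---in particular continuous---along \p-a.e.\ curve in $V$.

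To globalize, I would pass to a countable subfamily $\{V_i\}$ of the cover that exhausts $U$ up to a set of capacity zero. This is available from the quasi-Lindel\"of property of quasiopen sets (see Remark~\ref{rmk-quasi-Lindelof}); in the settings where the corollary is applied, for instance under the hypotheses of Theorem~\ref{thm-p-path-quasiopen}, the space $X$ is separable and one may simply take a countable cover by balls. Countable subadditivity of $\Cp$ then yields that $u$ is finite q.e.\ on $U$. For the curve condition, let $\Ga_i$ be the (zero \p-modulus) family of curves in $V_i$ along which $u$ fails to be absolutely continuous; the union $\Ga^*=\bigcup_i\Ga_i$ again has zero \p-modulus, so by Lemma~1.34 in~\cite{BBbook} \p-a.e.\ curve $\ga:[0,l_\ga]\to U$ has no subcurve in $\Ga^*$. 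Given such a $\ga$, compactness of $\ga([0,l_\ga])$ and a Lebesgue-number argument split $[0,l_\ga]$ into finitely many closed subintervals on each of which $\ga$ stays inside a single $V_i$; the corresponding subcurve does not lie in $\Ga^*\supseteq\Ga_i$, so $u\circ\ga$ is continuous on that subinterval, hence on all of $[0,l_\ga]$. This verifies~\ref{a-3}, and Proposition~\ref{prop-qcont=>composition} gives that $u$ is quasicontinuous.

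The one genuinely substantive step is the localization in the last paragraph: matching the subcurve lemma with the Lebesgue-number decomposition, and securing a countable subcover so that the exceptional curve families add up to a zero-modulus family. Everything else is either immediate from the definitions or a direct appeal to Propositions~\ref{prop-quasiopen-char}, \ref{prop-CpU} and~\ref{prop-qcont=>composition}; the bookkeeping between $\Cp$ and $\CpU$ is absorbed into Proposition~\ref{prop-CpU}, and local compactness is used only through Proposition~\ref{prop-qcont=>composition} and, if necessary, in producing the local cover.
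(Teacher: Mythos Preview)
Your overall strategy---verify condition~\ref{a-3} of Proposition~\ref{prop-qcont=>composition} and invoke that proposition---is exactly the paper's. The paper's proof, however, is a single sentence: it simply cites that Newtonian functions are measurable, finite q.e.\ (Proposition~1.30 in~\cite{BBbook}) and absolutely continuous on \p-a.e.\ curve (Proposition~3.1 in~\cite{Sh-rev} or Theorem~1.56 in~\cite{BBbook}), and then applies Proposition~\ref{prop-qcont=>composition}. No covers, no subcurve bookkeeping, no Lebesgue-number argument.

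Your more elaborate localization is not wrong in spirit, but the step you flag as ``the one genuinely substantive step'' contains a real gap. You justify the countable subcover by appealing to the quasi-Lindel\"of property via Remark~\ref{rmk-quasi-Lindelof}; that remark says precisely the opposite---it points out that the present proof \emph{avoids} the quasi-Lindel\"of principle, and that the proof of quasi-Lindel\"of in~\cite{BBLat3} relies on the Cheeger differentiable structure, hence on doubling and a Poincar\'e inequality, neither of which is assumed here. Your fallback (separability under the hypotheses of Theorem~\ref{thm-p-path-quasiopen}) does not cover the generality of the corollary. If you want to rescue the localization route, the correct observation is that $\spt\mu$ is always separable (Proposition~1.6 in~\cite{BBbook}) and $\Cp(X\setm\spt\mu)=0$, which gives a countable cover of $U$ up to a set of zero capacity without any quasi-Lindel\"of input. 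But the cleaner fix is simply to drop the cover argument and cite the standard $\Nploc$ facts directly, as the paper does.
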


Note that in particular we may let $U=X$.

\begin{proof}
Since every Newtonian function is measurable, finite q.e.\
(by \cite[Proposition~1.30]{BBbook}) and
absolutely continuous on \p-a.e.\ 
curve, by Shanmugalingam~\cite[Proposition~3.1]{Sh-rev} (or 
\cite[Theorem~1.56]{BBbook}), this follows
directly from Proposition~\ref{prop-qcont=>composition}.
\end{proof}

\begin{proof}[Proof of Theorem~\ref{thm-qcont-U}.]
This follows directly from Theorem~\ref{thm-p-path-quasiopen}
and Corollary~\ref{cor-all-Np-qcont}.
\end{proof}

\begin{remark} \label{rmk-quasi-Lindelof}
Theorem~\ref{thm-qcont-U} was recently obtained by 
Bj\"orn--Bj\"orn--Latvala~\cite{BBLat3}. 
The proof given here is very different from that in~\cite{BBLat3}
and appears as an immediate corollary
of other results. In particular, it does not use
the fine topology and the quasi-Lindel\"of principle, whose proof in \cite{BBLat3}
relies on the vector-valued, so-called Cheeger, differentiable structure.

The assumptions of our 
Corollary~\ref{cor-all-Np-qcont} are weaker than those of the version of
Theorem~\ref{thm-qcont-U} in \cite{BBLat3}.
On the other hand, in \cite{BBLat3},
quasicontinuity is deduced for
a larger local space than $\Nploc(U)$.
See \cite{BBLat3} for more details and the precise
definitions of the local spaces considered therein.
\end{remark}

The following example shows that \p-path open sets need not be quasiopen in general.

\begin{example}  \label{ex-qcont-not-p-path=qopen}
Assume that there are no nonconstant rectifiable curves in $X$.
For example, consider $\R$ with the snowflaked metric $d(x,y)=|x-y|^\al$, 
$0<\al<1$, and the Lebesgue measure.

Then every set is \p-path open, while Lemma~9.3 in Bj\"orn--Bj\"orn~\cite{BBnonopen} 
shows that quasiopen sets must be measurable.
Any nonmeasurable set  $U\subset X$ is thus \p-path open but  cannot be quasiopen.
Indeed, the function $u$ constructed in the proof of Theorem~\ref{thm-p-path-quasiopen}
is $\chi_U$, which has zero as an upper gradient, but it is not measurable and
thus not in $\Np(X)$.

Note that in this case the zero function is an upper gradient of every function
and thus $\Np(X)=L^p(X)$ and $\Cp$ is the extension of the measure 
$\mu$ to all subsets of $X$ as an outer measure.
It thus follows that the quasiopen sets are just the measurable sets, and hence
every \emph{measurable} \p-path open set is quasiopen.
Thus Corollary~\ref{cor-all-Np-qcont} applies in this case, 
but this already follows (in this particular case)
from Luzin's theorem.

The same argument applies also if 
the family of nonconstant rectifiable curves has zero \p-modulus.
(In this case ``upper gradient'' should be replaced by ``\p-weak upper gradient'' 
above.)
In fact, this assumption is equivalent to the equality
$\Np(X)=L^p(X)$ as sets of functions, see
L.~Mal\'y~\cite[Lemma~2.5]{lmaly3}.
\end{example}

\section{Outside the realm of a Poincar\'e inequality}
\label{sect-no-PI}

The doubling condition and the Poincar\'e inequality 
are standard assumptions in analysis on metric spaces. 
In particular, they guarantee that Lipschitz functions are dense in $\Np(X)$,
which in turn (together with completeness)
implies the quasicontinuity of Newtonian functions used in the proof
of Theorem~\ref{thm-p-path-quasiopen}.
On the other hand, there are plenty of spaces where the Poincar\'e inequality
fails or the doubling condition is violated, 
but where Newtonian functions are quasicontinuous.
Therefore, in this section we relax the assumptions of 
Theorem~\ref{thm-p-path-quasiopen} and 
obtain Theorem~\ref{thm-p-path-quasiopen-no-PI-intro}.

We postpone the proof of Theorem~\ref{thm-p-path-quasiopen-no-PI-intro} to the
end of this section.
Meanwhile, we formulate some consequences of it.
First, we have the following characterization of quasicontinuity among
Borel functions, analogous to Proposition~\ref{prop-qcont=>composition}. 
Note that 
the assumption of quasiopenness of \p-path open sets
can be deduced
using Theorem~\ref{thm-p-path-quasiopen-no-PI-intro},
or its generalization Theorem~\ref{thm-p-path-quasiopen-no-PI-gen} below,
under the assumptions therein.

\begin{prop} \label{prop-qcont=>composition-no-PI} 
Assume that $X$ is 
locally compact
and that every Borel \p-path open set in $X$ is quasiopen.
Let $u:U \to \eR$ be a Borel function on a quasiopen set~$U$. 
Then the following are equivalent\/\textup{:}
\begin{enumerate}
\item \label{B-1}
$u$ is quasicontinuous\/ \textup{(}with respect to $\Cp$ or $\CpU$\textup{)}\textup{;}
\item \label{B-2}
$u$ is finite q.e.\ and 
$u\circ\ga$ is continuous\/ 
\textup{(}on the set where it is defined\/\textup{)}
for \p-a.e.\ curve $\ga:[0,l_\ga]\to X$\textup{;}
\item \label{B-3}
$u$ is finite q.e.\ and 
$u\circ\ga$ is continuous for \p-a.e.\ curve $\ga:[0,l_\ga]\to U$.
\end{enumerate}
\end{prop}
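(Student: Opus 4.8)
The plan is to mirror exactly the proof of Proposition~\ref{prop-qcont=>composition}, checking that the extra hypothesis of Borel measurability can be propagated through every step where previously measurability of $u$ was used or produced. The key observation is that the sets $U_\al$ and $V_\al$ built from a Borel function $u$ are themselves Borel, so the weaker hypothesis ``every Borel \p-path open set is quasiopen'' suffices in place of ``every measurable \p-path open set is quasiopen''.

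\medskip

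First I would treat \ref{B-1} \imp \ref{B-2}. Assume $u$ is quasicontinuous; by Proposition~\ref{prop-qcont-char} it is finite q.e.\ and the superlevel and sublevel sets $U_\al$, $V_\al$, $\al\in\R$, are quasiopen. Since $u$ is Borel, these sets are Borel, but in fact we only need that they are quasiopen, hence \p-path open by Remark~3.5 in~\cite{Sh-harm}; then, exactly as in the proof of Proposition~\ref{prop-qcont=>composition}, for \p-a.e.\ curve $\ga:[0,l_\ga]\to X$ the set $I_\ga=\ga^{-1}(U)$ is relatively open and the level sets $\ga^{-1}(U_\al)$, $\ga^{-1}(V_\al)$ are relatively open in $I_\ga$ for all $\al\in\Q$, which forces $u\circ\ga$ to be continuous on $I_\ga$. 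The implication \ref{B-2} \imp \ref{B-3} is trivial, and \ref{B-3} \imp \ref{B-2} is verbatim the corresponding argument in Proposition~\ref{prop-qcont=>composition}: $U$ is quasiopen hence \p-path open, so by Lemma~1.34 in~\cite{BBbook} \p-a.e.\ curve in $X$ has no subcurve in the exceptional family $\Ga$ of curves in $U$ along which $u$ is discontinuous, and along such a curve $u\circ\ga$ is continuous where defined.

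\medskip

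The implication requiring the new hypothesis is \ref{B-2} \imp \ref{B-1}. As before, since $U$ is quasiopen hence \p-path open, for \p-a.e.\ curve $\ga:[0,l_\ga]\to X$ the set $I_\ga=\ga^{-1}(U)$ is relatively open and $u\circ\ga$ is continuous on $I_\ga$; consequently the level sets in~\eqref{eq-level-sets-a.e.-curve} are relatively open in $I_\ga$, hence in $[0,l_\ga]$, for all $\al\in\R$. This shows $U_\al$ and $V_\al$ are \p-path open. The point is that, because $u$ is a Borel function, $U_\al=\{x\in U:u(x)>\al\}$ and $V_\al=\{x\in U:u(x)<\al\}$ are Borel sets; therefore the hypothesis ``every Borel \p-path open set in $X$ is quasiopen'' applies and yields that $U_\al$ and $V_\al$ are quasiopen. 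Since $u$ is finite q.e.\ by assumption, Proposition~\ref{prop-qcont-char} (using local compactness of $X$) concludes that $u$ is $\Cp$-quasicontinuous, and equivalently $\CpU$-quasicontinuous.

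\medskip

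I do not expect any serious obstacle; the only thing to be careful about is that $U_\al$ and $V_\al$ are Borel \emph{subsets of $X$} (not merely of $U$) so that the hypothesis, stated for Borel \p-path open sets in $X$, is directly applicable — this holds because $U$ itself is quasiopen, hence measurable, but more relevantly because $U_\al=u^{-1}((\al,\infty])\cap U$ and both $u^{-1}((\al,\infty])$ (Borel in $U$, which is a Borel subset of $X$ since quasiopen sets are Borel up to capacity zero, or one simply notes that a \p-path open superlevel set works inside $X$) and $U$ can be taken Borel. In fact it is cleanest to observe that $U_\al$, viewed as a subset of $X$, is \p-path open by the curve argument above and is Borel since $u$ is Borel and $U$ is quasiopen, hence contained in a Borel \p-path open set differing from it by a capacity-zero set; applying Theorem~\ref{thm-p-path-quasiopen-no-PI-intro} (or its coanalytic generalization) then finishes. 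The remaining implications are copied from Proposition~\ref{prop-qcont=>composition} with ``measurable'' replaced by ``Borel'' wherever it qualifies $u$, noting that no genuinely new verification is needed there.
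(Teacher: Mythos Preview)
Your approach is correct and matches the paper's proof exactly: the paper simply says to reuse the proof of Proposition~\ref{prop-qcont=>composition} verbatim for \ref{B-1} $\Rightarrow$ \ref{B-2} $\Leftrightarrow$ \ref{B-3}, and for \ref{B-2} $\Rightarrow$ \ref{B-1} to ``replace the word `measurable' by `Borel' twice''.

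Your final paragraph, however, introduces an error while trying to address a subtlety the paper itself glosses over. The concern---whether $U_\alpha$ is Borel in $X$ rather than merely in $U$---is legitimate, since quasiopen sets need not be Borel. But your proposed fix via Theorem~\ref{thm-p-path-quasiopen-no-PI-intro} is illegitimate: that theorem assumes $X$ is complete and that bounded Newtonian functions are quasicontinuous, neither of which is hypothesized in Proposition~\ref{prop-qcont=>composition-no-PI}. If you wish to address this point, the correct route is to use the hypothesis of the proposition itself: pick a $G_\delta$ set $\tilde U\supset U$ with $\Cp(\tilde U\setminus U)=0$ (available since $U$ is quasiopen), note that $U_\alpha$ differs from the Borel \p-path open set $B_\alpha\cap\tilde U$ (where $U_\alpha=B_\alpha\cap U$ with $B_\alpha$ Borel in $X$) by a set of capacity zero, apply the stated hypothesis directly to $B_\alpha\cap\tilde U$, and deduce that $U_\alpha$ is quasiopen. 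Alternatively, simply read ``$u$ Borel on $U$'' as the paper evidently does, namely that the level sets are Borel in $X$.
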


\begin{proof}
As mentioned before the proof of Proposition~\ref{prop-qcont=>composition},
the proof of \ref{a-1} \imp \ref{a-2} \eqv \ref{a-3} therein
holds without any assumptions on $X$ and therefore
applies also  here.

\ref{B-2} \imp \ref{B-1}
Replace the word ``measurable'' by ``Borel''
twice in the proof of this implication in
the proof of Proposition~\ref{prop-qcont=>composition}.
\end{proof}

\begin{cor}  \label{cor-q-open-Borel-equiv-qcont}
Assume that $X$ is complete and locally compact.
Consider the following statements\/\textup{:}
\begin{enumerate}
\item \label{it-bdd-qcont}
every bounded $u\in\Np(X)$ is quasicontinuous\/\textup{;}
\item \label{it-qcont-Np}
every $u\in\Nploc(X)$ is quasicontinuous\/\textup{;}
\item \label{it-Borel-q-open}
every Borel \p-path open set in $X$ is quasiopen\/\textup{;}
\item \label{it-Borel-qcont}
if $U$ is quasiopen, 
  then every Borel function $u\in\Nploc(U)$ is quasicontinuous.
\end{enumerate}
Then \ref{it-bdd-qcont} $\eqv$  \ref{it-qcont-Np} 
$\imp$ \ref{it-Borel-q-open} $\imp$ \ref{it-Borel-qcont}.

Moreover, if every bounded function $u\in\Np(X)$ has a Borel 
representative $\ut \in \Np(X)$, then all the statements are equivalent.
\end{cor}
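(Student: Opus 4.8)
The plan is to prove the chain \ref{it-bdd-qcont} \eqv \ref{it-qcont-Np} \imp \ref{it-Borel-q-open} \imp \ref{it-Borel-qcont} and then, under the extra hypothesis, to add the missing arrow \ref{it-Borel-qcont} \imp \ref{it-bdd-qcont}. The implication \ref{it-qcont-Np} \imp \ref{it-bdd-qcont} is trivial, since bounded functions in $\Np(X)$ lie in $\Nploc(X)$. For \ref{it-bdd-qcont} \imp \ref{it-qcont-Np} I would first upgrade \ref{it-bdd-qcont} to the statement that every $u\in\Np(X)$ is quasicontinuous: the truncations $u_k=\max\{-k,\min\{u,k\}\}$ belong to $\Np(X)\cap L^\infty(X)$ and hence are quasicontinuous by \ref{it-bdd-qcont}, while $u_k\to u$ in $\Np(X)$ by dominated convergence (the minimal \p-weak upper gradient of $u-u_k$ vanishes a.e.\ outside $\{|u|\ge k\}$); since $X$ is complete, a q.e.\ convergent subsequence shows that the limit $u$ is again quasicontinuous (stability of quasicontinuity under $\Np(X)$-convergence). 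A standard localization, patching together $\Np$-functions on a countable cover of $X$ by balls, then yields \ref{it-qcont-Np}.

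Next, \ref{it-bdd-qcont} \imp \ref{it-Borel-q-open} is precisely Theorem~\ref{thm-p-path-quasiopen-no-PI-intro}, since $X$ is complete. For \ref{it-Borel-q-open} \imp \ref{it-Borel-qcont}, let $U\subset X$ be quasiopen and $u\in\Nploc(U)$ Borel; this is the Borel analogue of Corollary~\ref{cor-all-Np-qcont}. Newtonian functions are finite q.e.\ (\cite[Proposition~1.30]{BBbook}; note that $\{x\in U:|u(x)|=\infty\}$ is Borel here) and are absolutely continuous, hence continuous, on \p-a.e.\ curve (\cite[Proposition~3.1]{Sh-rev} or \cite[Theorem~1.56]{BBbook}), so, arguing exactly as in the proof of Corollary~\ref{cor-all-Np-qcont}, condition~\ref{B-2} of Proposition~\ref{prop-qcont=>composition-no-PI} is satisfied. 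Since $X$ is locally compact and \ref{it-Borel-q-open} is assumed, that proposition applies and shows that $u$ is quasicontinuous.

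For the final claim, assume that every bounded $u\in\Np(X)$ has a Borel representative; it remains to show \ref{it-Borel-qcont} \imp \ref{it-bdd-qcont}. Given a bounded $u\in\Np(X)$, say $|u|\le M$, choose a Borel representative $\ut\in\Np(X)$ and replace it by $\max\{-M,\min\{\ut,M\}\}$, which is still a bounded Borel function in $\Np(X)\subset\Nploc(X)$ with $\ut=u$ q.e. Applying \ref{it-Borel-qcont} with $U=X$ (which, being open, is quasiopen) gives that $\ut$ is quasicontinuous. Since $X$ is complete and locally compact, the set $\{u\ne\ut\}$, which has capacity zero, is contained in an open set of arbitrarily small capacity (\cite[Proposition~1.4]{BBS5}, \cite[Proposition~4.7]{BBLeh1}); adjoining such a set to the exceptional open set of $\ut$ shows that $u$ is quasicontinuous as well, so \ref{it-bdd-qcont} holds and all four statements are equivalent.

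The only step I expect to require real work is the upgrade inside \ref{it-bdd-qcont} \imp \ref{it-qcont-Np}, namely the stability of quasicontinuity under $\Np(X)$-convergence (where completeness enters) together with the passage from $\Np(X)$ to $\Nploc(X)$; the remaining implications are immediate consequences of Theorem~\ref{thm-p-path-quasiopen-no-PI-intro} and Proposition~\ref{prop-qcont=>composition-no-PI} and of the elementary fact that quasicontinuity is preserved under equality quasi-everywhere.
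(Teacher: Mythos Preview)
Your proof is correct and matches the paper for \ref{it-bdd-qcont}$\imp$\ref{it-Borel-q-open}$\imp$\ref{it-Borel-qcont} and for \ref{it-Borel-qcont}$\imp$\ref{it-bdd-qcont} under the extra hypothesis. The one real difference is in \ref{it-bdd-qcont}$\imp$\ref{it-qcont-Np}: the paper sidesteps your truncation-plus-convergence argument by composing with $\arctan$. Given $u\in\Nploc(X)$, it forms $u_j:=u\eta_j\in\Np(X)$ with Lipschitz cutoffs $\eta_j$, notes that $\arctan u_j$ is bounded and hence quasicontinuous by \ref{it-bdd-qcont}, and then recovers the quasicontinuity of $u_j=\tan(\arctan u_j)$ from the fact that $u_j$ is finite q.e.\ together with the fact that zero-capacity sets have open neighbourhoods of arbitrarily small capacity (\cite[Proposition~1.4]{BBS5}, \cite[Proposition~4.7]{BBLeh1}); a final appeal to \cite[Lemma~5.18]{BBbook} patches the $u_j$ together. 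This is a one-shot reduction with no limiting process. Your route via $u_k\to u$ in $\Np(X)$ and stability of quasicontinuity under $\Np$-convergence also works, but the ingredient that lets you replace the slow-convergence sets by \emph{open} sets of small capacity is local compactness (through the same references, or via Proposition~\ref{prop-qcont-char}\,\ref{it-char} applied to the quasicontinuous differences $u_{k+1}-u_k$), not completeness of $X$; since both are assumed here this is harmless. The $\arctan$ trick buys a shorter, limit-free argument, while yours is the familiar density-style argument one would use anyway when proving quasicontinuity from approximation by continuous functions.
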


Recall that $\ut$ is termed a representative of $u$ if 
both functions
belong to the same equivalence class in $\Np(X)/{\sim}$
(or, equivalently, if $\ut=u$ q.e., see Section~\ref{sect-prelim}).

\begin{proof}
\ref{it-bdd-qcont} \imp \ref{it-qcont-Np}
Let $u \in \Nploc(X)$.
Fix $x_0 \in X$ and 
let $\eta_j \in \Lip(X)$ be such that $\eta_j =1$ on $B(x_0,j)$
and $\eta_j=0$ outside $B(x_0,2j)$.
Then $u_j:=u\eta_j \in \Np(X)$ and also $\arctan u_j \in \Np(X)$.
By assumption $\arctan u_j$ is quasicontinuous.
As $u_j$ is finite q.e., 
Proposition~1.4
in \cite{BBS5} and Proposition~4.7 in \cite{BBLeh1}
show that also $u_j$ is quasicontinuous in $X$.
Hence $u$ is quasicontinuous in $B(x_0,j)$ for each $j$,
and it follows from Lemma~5.18 in \cite{BBbook} that $u$ is
quasicontinuous in $X$.

\ref{it-qcont-Np} \imp \ref{it-bdd-qcont}
This is trivial.

\ref{it-bdd-qcont} \imp \ref{it-Borel-q-open}
This is Theorem~\ref{thm-p-path-quasiopen-no-PI-intro}.

\ref{it-Borel-q-open} \imp \ref{it-Borel-qcont}
Let $u\in\Nploc(U)$ be a Borel function.
Then it is finite q.e.\ and absolutely continuous on \p-a.e.\ 
curve (see Shanmugalingam~\cite[Proposition~3.1]{Sh-rev} or 
Proposition~1.30 and Theorem~1.56 in \cite{BBbook}).
The quasicontinuity of $u$ then follows from 
Proposition~\ref{prop-qcont=>composition-no-PI}.

Finally, assume that every bounded function $u\in\Np(X)$ has a Borel
representative $\ut \in \Np(X)$ and that \ref{it-Borel-qcont} holds
with $U=X$.
Let $u\in\Np(X)$ be bounded and $\ut \in \Np(X)$ be a Borel 
representative of $u$.
By assumption $\ut$ is quasicontinuous.
As $X$ is locally compact, 
Proposition~1.4
in Bj\"orn--Bj\"orn--Shanmugalingam~\cite{BBS5} (or \cite[Proposition~5.27]{BBbook})
and Proposition~4.7 in Bj\"orn--Bj\"orn--Lehrb\"ack~\cite{BBLeh1}
show that also $u$ is quasicontinuous.
Hence \ref{it-bdd-qcont} holds.
\end{proof}

The following open problems are  
natural in view of Corollary~\ref{cor-q-open-Borel-equiv-qcont}
and the comment after it.

\begin{problem}  \label{prob-Borel-repr}
Does every (bounded) $u\in\Np(X)$ have a Borel representative?
\end{problem}

Note that if every bounded $u\in\Np(X)$ has a Borel
representative, then 
also every unbounded
 $v \in \Np(X)$ has a Borel representative.
Indeed, if $u=\arctan v$ has a Borel representative $\ut$ 
(which we may require to have values in $(-\pi/2,\pi/2)$), then
$\tan \ut$ becomes a Borel representative of $v$. 

Since any quasiopen set can be written as a union of a Borel set and a set of 
capacity zero (cf.\ \cite[Lemma~9.5]{BBLat2}
Proposition~\ref{prop-qcont-char}, shows that
the answer to Open problem~\ref{prob-Borel-repr} is 
positive for a particular space $X$
if all Newtonian functions on $X$ are quasicontinuous.

\begin{problem}
Can ``Borel'' in Theorem~\ref{thm-p-path-quasiopen-no-PI-intro} be replaced
by ``measurable''?
Example~\ref{ex-qcont-not-p-path=qopen} shows that it 
cannot be omitted altogether.
\end{problem}

It would follow that a version of 
Corollary~\ref{cor-q-open-Borel-equiv-qcont} 
with ``Borel'' replaced by ``measurable''
would also be possible,
and since all Newtonian functions are measurable, the equivalence of 
\ref{it-bdd-qcont}--\ref{it-Borel-qcont} therein
would follow in that case.

We now proceed to the proof of Theorem~\ref{thm-p-path-quasiopen-no-PI-intro}.
For this, we will need Proposition~\ref{prop-meas-for-anal-Borel}
about measurability of the function $u$
in~\eqref{eq-def-u-from-rho}, which does not rely on any Poincar\'e
inequality.

Recall that a subset of a complete separable metric space 
is \emph{analytic} (or \emph{Suslin}) if it is a continuous image of
a complete separable metric space,
see e.g.\ Kechris~\cite[Definitions~3.1 and 14.1]{Kech}.
By Theorem~14.11 (Suslin's theorem) therein, every Borel subset 
of a complete separable space is analytic. 
(In fact, it shows that 
Borel sets are exactly those analytic sets which are also \emph{coanalytic}, i.e.\
whose complements are analytic.)

Proposition~14.4 in~\cite{Kech} tells us that countable unions and
countable intersections of analytic sets are analytic.
Moreover, if $f:Y\to Z$ is a Borel mapping between two complete separable
metric spaces,
then images and preimages under $f$ of analytic sets are analytic
(also by Proposition~14.4 in~\cite{Kech}).
By Theorem~21.10 in~\cite{Kech} (Luzin's theorem), every analytic subset 
of a complete separable metric 
space $Y$ is $\nu$-measurable for every $\sigma$-finite
Borel measure $\nu$ on $Y$.

When proving Theorem~\ref{thm-p-path-quasiopen-no-PI-intro}
we will  
need Proposition~\ref{prop-meas-for-anal-Borel}
with a lower semicontinuous $\rho$ only, in which case
the proof can be considerably simplified
(in particular the use of Lemma~\ref{lem-JJRRS2.4} can be avoided).
As we find it interesting that Proposition~\ref{prop-meas-for-anal-Borel}
is true also for Borel functions we give a proof of the more
general result, for which we need 
the following result 
from 
J\"arvenp\"a--J\"arvenp\"a--Rogovin--Rogovin--Shan\-mu\-ga\-lin\-gam~\cite[Lemma~2.4]{JJRRS}.

\begin{lemma}  \label{lem-JJRRS2.4}
Let $Z$ be a metric space and let $\YY$ be a class of functions 
$\rho: Z\to[0,\infty]$ such that the following properties 
hold\/\textup{:}
\begin{enumerate}
\item \label{it-cont-Y}
$\YY$ contains all continuous $\rho: Z\to[0,\infty]$\/\textup{;}
\item \label{it-incr-conv-Y}
if $\rho_j\in\YY$ and $\rho_j\nearrow\rho$ then $\rho\in\YY$\/\textup{;}
\item \label{it-lin-comb-Y}
if $\rho,\sigma\in\YY$ and $r,s\in\R^\limplus$ then 
$r\rho+s\sigma\in\YY$\/\textup{;}
\item \label{it-1-rho-Y}
if $\rho\in\YY$ and $0\le\rho\le1$ then $1-\rho\in\YY$.
\end{enumerate}
Then $\YY$ contains all Borel functions $\rho: Z\to[0,\infty]$.
\end{lemma}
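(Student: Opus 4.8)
The plan is to prove this by a monotone-class (Dynkin-system) argument, carried out in three stages: first put the indicators of open and closed sets into $\YY$, then upgrade to indicators of all Borel sets via the $\pi$--$\lambda$ theorem, and finally reach arbitrary nonnegative Borel functions by increasing approximation. For the first stage, given an open set $G\subset Z$ I would use the continuous functions $\rho_j(x)=\min\{1,j\,\dist(x,Z\setm G)\}$, which lie in $\YY$ by~\ref{it-cont-Y} and increase pointwise to $\chi_G$, so that $\chi_G\in\YY$ by~\ref{it-incr-conv-Y}. For a closed set $F$, the complement $Z\setm F$ is open with $0\le\chi_{Z\setm F}\le1$, so~\ref{it-1-rho-Y} gives $\chi_F=1-\chi_{Z\setm F}\in\YY$.

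For the second stage I would consider $\mathcal D=\{A\subset Z:\chi_A\in\YY\}$ and check that it is a Dynkin system. It contains $Z$ since the constant function $1$ is continuous; it is closed under complements because $0\le\chi_A\le1$, so~\ref{it-1-rho-Y} yields $\chi_{Z\setm A}=1-\chi_A\in\YY$; and it is closed under countable disjoint unions, since for pairwise disjoint $A_1,A_2,\dots\in\mathcal D$ the finite sums $\sum_{i=1}^n\chi_{A_i}=\chi_{A_1\cup\dots\cup A_n}$ lie in $\YY$ by~\ref{it-lin-comb-Y} and increase to $\chi_{\bigcup_i A_i}$, which is then in $\YY$ by~\ref{it-incr-conv-Y}. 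By the first stage $\mathcal D$ contains all open sets, which form a $\pi$-system generating the Borel $\sigma$-algebra, so the $\pi$--$\lambda$ theorem gives that $\mathcal D$ contains every Borel subset of $Z$.

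For the final stage, given a Borel function $\rho:Z\to[0,\infty]$ I would take the usual dyadic truncations
\[
\rho_j=\sum_{k=1}^{j2^j}\frac{k-1}{2^j}\,\chi_{\{(k-1)2^{-j}\le\rho<k2^{-j}\}}+j\,\chi_{\{\rho\ge j\}},
\]
which are finite nonnegative linear combinations of indicators of Borel sets, hence belong to $\YY$ by the second stage together with~\ref{it-lin-comb-Y}; since $\rho_j\nearrow\rho$ pointwise, \ref{it-incr-conv-Y} gives $\rho\in\YY$. The one genuinely delicate point — and the step I would treat most carefully — is the verification that $\mathcal D$ is closed under complementation: property~\ref{it-1-rho-Y}, although stated only for functions bounded by $1$, applies in particular to indicator functions, and that is exactly what promotes $\mathcal D$ from a $\pi$-system closed under increasing disjoint unions to a full Dynkin system. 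Once this is in place the remaining steps are routine, and I note that no completeness or separability of $Z$ is used anywhere in the argument.
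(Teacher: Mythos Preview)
The paper does not give its own proof of this lemma; it is merely quoted from J\"arvenp\"a\"a--J\"arvenp\"a\"a--Rogovin--Rogovin--Shanmugalingam~\cite[Lemma~2.4]{JJRRS} and then applied in the proof of Proposition~\ref{prop-meas-for-anal-Borel}. So there is nothing in the paper to compare your argument against.

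Your proof is correct and is the expected monotone-class argument. All three stages are sound: the continuous approximants $\rho_j(x)=\min\{1,j\,\dist(x,Z\setminus G)\}$ do increase to $\chi_G$ (with the harmless convention $\dist(x,\emptyset)=\infty$ when $G=Z$); the verification that $\mathcal D$ is a Dynkin system is accurate, and since the open sets form a $\pi$-system generating the Borel $\sigma$-algebra the $\pi$--$\lambda$ theorem applies; and the dyadic simple-function approximation in the final stage is standard, with repeated use of~\ref{it-lin-comb-Y} yielding finite nonnegative combinations by induction. Your remark that neither completeness nor separability of $Z$ is needed is also correct.
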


\begin{proof}[Proof of Proposition~\ref{prop-meas-for-anal-Borel}]
For $\al>0$, let $\Ll_\al$ consist of all continuous 
curves $\ga: [0,1]\to X$ with $\Lip\gamma\le\alpha$.
(In this proof we want all curves parameterized on the same
interval, and do \emph{not} assume that they are parameterized by
arc length.)
Then $\Ll_\al$ is a 
metric space with respect to the supremum norm.
Since $X$ is complete, it follows
from Ascoli's theorem that $\Ll_\al$ is complete.
(See e.g.\ Royden~\cite[p.\ 169]{royden}
for a version of Ascoli's theorem valid for metric space
valued equicontinuous functions.)

As $X$ is separable, 
it is 
easily verified that $\Ll_\al$ is separable.
Indeed, let $\U$ be a countable base of the topology on $X$
and let $\P$ be the family of all finite sequences 
$Q=((I_1,U_1),\dots, (I_m,U_m))$,
where $I_i\subset [0,1]$ are closed intervals 
with rational endpoints  
and $U_i$ are selected from $\U$. 
Then $\P$ is countable.
For each $Q=((I_1,U_1),\dots (I_m,U_m))\in\P$, 
let $\Ll_{Q}$ be the family of all $\gamma\in\Ll_{\alpha}$ such that 
$\gamma(I_i)\subset U_i$, $i=1,\dots,m$.
Choosing one curve from every nonempty $\Ll_{Q}$ provides us with 
a countable dense system in $\Ll_{\alpha}$.

Let $\YY_\al$ be the collection of all $\rho:X\to[0,\infty]$
for which the functional $\Phi_\rho:\Ll_\al\to[0,\infty]$ defined by
\[
\Phi_\rho:\ga\mapsto \int_\ga\rho\,ds
\]
is Borel.
Lemma~2.2 
in
J\"arvenp\"a--J\"arvenp\"a--Rogovin--Rogovin--Shan\-mu\-ga\-lin\-gam~\cite{JJRRS}
shows that if $\rho$ is continuous then $\rho\in\YY_\al$.
If $\rho_j\in\YY$ and $\rho_j\nearrow\rho$ then 
the monotone convergence theorem implies that for all $\ga\in\Ll_\al$,
\[
\Phi_{\rho_j}(\ga) = \int_\ga \rho_j\,ds \nearrow \int_\ga \rho\,ds
= \Phi_{\rho}(\ga),
\]
i.e.\ that $\Phi_{\rho}$ is a limit of Borel functions on $\Ll_\al$, and
hence Borel.
Thus, $\rho\in\YY_\al$.
In particular, \ref{it-cont-Y} and \ref{it-incr-conv-Y} in
Lemma~\ref{lem-JJRRS2.4} are satisfied by $\YY_\al$.
The properties \ref{it-lin-comb-Y} and \ref{it-1-rho-Y} therein
follow from the linearity of the integral.

We can thus conclude from Lemma~\ref{lem-JJRRS2.4} that for every $\al>0$
and every Borel $\rho:X\to[0,\infty]$, the functional 
$\Phi_\rho:\Ll_\al\to[0,\infty]$ is Borel.
Let $\rho:X\to[0,\infty]$ be a fixed Borel function
and $\alpha>0$ be arbitrary.
We shall prove that the set
$G=\{x\in X:u_\rho(x)<\alpha\}$ is measurable. 

First, assume that $\rho\ge\de$ for some $\de>0$. 
Note that if $x\in G$, then for every $\ga\in\Ga_x$ with 
$\int_\ga \rho\,ds<\al$, we have $l_\ga<\al/\de$ and hence the reparameterized
curve $\gat(t):=\ga(l_\ga t)$ belongs to $\Ll_{\al/\de}$.
It follows that for $x\in G$, the infimum in the definition of $u_\rho$ can
equivalently be taken over all $\ga\in\Ga_x\cap\Ll_{\al/\de}$.
We define
\[
 \Gamma_1=\biggl\{\ga\in\Ll_{\al/\de}:\int_{\ga}\rho\,ds<\alpha\biggr\}
\quad \text{and} \quad
\Gamma_2=\{\ga\in\Ll_{\al/\de}: \ga(1)\notin U\}.
\]
Also let 
$f_j: \Ll_{\al/\de} \to X$ be the evaluation maps given by 
$f_j(\ga)=\ga(j)$, $j=0,1$,
which are clearly $1$-Lipschitz,
 and thus Borel.

By the above, the functional $\Phi_\rho:\Ll_{\al/\de}\to[0,\infty]$ is Borel, 
and thus $\Ga_1=\Phi_\rho^{-1}([0,\alp))$ is a Borel subset of $\Ll_{\al/\de}$.
The set $\Ga_2$ is the preimage of the analytic set $X\setminus U$ under 
the Borel mapping $f_1$, and thus $\Ga_2$ is analytic by Proposition~14.14
in Kechris~\cite{Kech}.
It thus follows from Proposition~14.4
in~\cite{Kech} that $\Ga_1\cap\Ga_2$ is analytic.
Since $G=f_0(\Ga_1\cap\Ga_2)$, we conclude that $G$ is analytic, and thus 
measurable, by Luzin's theorem~\cite[Theorem~21.10]{Kech}.

Now, let $\rho:X\to[0,\infty]$ be arbitrary and set $\rho_j=\rho+1/j$,
$j=1,2,\ldots$.
By the above, each $u_{\rho_j}$ is measurable.
We shall show that $u_\rho=\lim_{j\to\infty} u_{\rho_j}$, which implies the
measurability of $u_\rho$.

Given $x\in X$ and $\eps>0$, there exists $\ga\in\Ga_x$ such that
$\int_\ga\rho\,ds < u_\rho(x)+\eps$.
Since $\ga$ is rectifiable, we have 
\[
u_\rho(x) \le u_{\rho_j}(x) \le \int_\ga \rho_j\,ds \le \int_\ga (\rho+1/j)\,ds
< u_\rho(x)+\eps+\frac{l_\ga}{j}.
\]
Letting $j\to\infty$ and then $\eps\to0$ shows that 
$u_\rho(x)=\lim_{j\to\infty} u_{\rho_j}(x)$ for all $x\in X$,
and  we conclude that $u_\rho$ is measurable.
\end{proof}

We are now ready to prove Theorem~\ref{thm-p-path-quasiopen-no-PI-intro}.
It will be obtained in a more general form, using coanalytic sets.
For separable $X$ we have the following result.

\begin{thm} \label{thm-p-path-quasiopen-no-PI-separable}
Assume that $X$ is complete and separable, and that every 
bounded $u \in \Np(X)$ is quasicontinuous.
Then every coanalytic
\p-path open set $U\subset X$ is quasiopen.
\end{thm}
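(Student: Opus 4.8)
The plan is to follow the proof of Theorem~\ref{thm-p-path-quasiopen} essentially verbatim, substituting for the two places where doubling and the Poincar\'e inequality were used: the measurability of the test function $u$, which there came from \cite[Theorem~1.11]{JJRRS}, will now come from Proposition~\ref{prop-meas-for-anal-Borel}; and the quasicontinuity of $u$, which there followed from the density of Lipschitz functions, is now exactly the standing hypothesis. So let $U\subset X$ be coanalytic and \p-path open, and let $\Gamma$ be the family of curves $\ga$ for which $\ga^{-1}(U)$ is not relatively open; this family has zero \p-modulus, so there is $0\le\rho\in L^p(X)$ with $\int_\ga\rho\,ds=\infty$ for all $\ga\in\Gamma$. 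After replacing $\rho$ by a suitable majorant we may and do assume that $\rho$ is lower semicontinuous, and in particular Borel.

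First assume $U$ is bounded and fix a ball $B\supset U$. Define $u$ by~\eqref{eq-def-u-from-rho}, so that $u=\min\{1,u_{\rho+\chi_B}\}$, where $u_{\rho+\chi_B}$ is the function of Proposition~\ref{prop-meas-for-anal-Borel} formed with the Borel function $\rho+\chi_B$. Since $X$ is complete and separable and $U$ is coanalytic, that proposition shows that $u_{\rho+\chi_B}$, and hence $u$, is measurable. Exactly as in the proof of Theorem~\ref{thm-p-path-quasiopen}, $u=0$ on $X\setm U$, $u$ has $\rho+\chi_B\in L^p(X)$ as an upper gradient (by \cite[Lemma~3.1]{BBS5} or \cite[Lemma~5.25]{BBbook}), and $0\le u\le1$ with $\{u>0\}\subset U\subset B$, so $u$ is a bounded function in $\Np(X)$. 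By hypothesis $u$ is then quasicontinuous, and Proposition~\ref{prop-qcont-char}, applied with $X$ in the role of the quasiopen set -- specifically the implication that a $\Cp$-quasicontinuous function has quasiopen superlevel sets, which by Remark~\ref{rmk-qcont-char} needs no local compactness -- yields that $\{u>0\}$ is quasiopen. The identity $\{u>0\}=U$, i.e.\ that $u>0$ on $U$, is the only point where \p-path openness enters, and its proof is word for word that of Theorem~\ref{thm-p-path-quasiopen}: if $u(x)=0$ at some $x\in U$, one concatenates (with reversals) short curves $\ga_j$ from $x$ to $X\setm U$ with $\int_{\ga_j}(\rho+\chi_B)\,ds\le2^{-j}$ into a single rectifiable curve $\gat$ ending at $x$ and meeting $X\setm U$ at parameters accumulating at the endpoint, so $\gat\in\Gamma$ although $\int_\gat\rho\,ds\le2$, a contradiction.

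For unbounded $U$, cover $X$ by countably many open balls $B_j$, which is possible since $X$ is separable. Each $U\cap B_j$ is bounded, \p-path open, and coanalytic (its complement is the union of the analytic set $X\setm U$ and the open set $X\setm B_j$, hence analytic), so the bounded case gives that each $U\cap B_j$ is quasiopen; choosing open $G_j$ with $\Cp(G_j)<2^{-j}\eps$ and $(U\cap B_j)\cup G_j$ open and setting $G=\bigcup_jG_j$, one has $\Cp(G)<\eps$ and $U\cup G=\bigcup_j((U\cap B_j)\cup G_j)$ open, so $U$ is quasiopen. The substantive new input here is Proposition~\ref{prop-meas-for-anal-Borel}, which is already established; within the present argument the only points needing care are that the modulus witness $\rho$ can be taken Borel, so that Proposition~\ref{prop-meas-for-anal-Borel} applies to $\rho+\chi_B$, and the routine observation that coanalyticity is preserved under intersection with an open ball. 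I do not expect any serious obstacle beyond these.
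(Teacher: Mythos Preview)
Your proposal is correct and follows essentially the same approach as the paper's own proof, which likewise says to reuse the proof of Theorem~\ref{thm-p-path-quasiopen} with Proposition~\ref{prop-meas-for-anal-Borel} replacing \cite[Theorem~1.11]{JJRRS} for measurability and the standing hypothesis replacing the Poincar\'e-based quasicontinuity. You are in fact somewhat more careful than the paper on two points: you explicitly invoke Remark~\ref{rmk-qcont-char} to justify using the implication \ref{it-str-qcont}~$\Rightarrow$~\ref{it-char} of Proposition~\ref{prop-qcont-char} for the bounded real-valued function $u$ without local compactness, and in the unbounded case you verify that each $U\cap B_j$ remains coanalytic, whereas the paper simply asserts that ``the rest of the proof goes through verbatim.''
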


For nonseparable spaces we use the fact that $\supp \mu$ is always 
separable, by Proposition~1.6 in \cite{BBbook}.
This way we can avoid nonseparable analytic (Suslin) sets and reduce our
considerations to separable spaces where Suslin and analytic sets are the
same, cf.\ Hansell~\cite{Hans} and Kechris~\cite{Kech}.
The following result is primarily designed for nonseparable spaces.
However, it improves the criterion also for separable spaces, 
since we do not impose any assumptions on $U\setm\spt\mu$.

\begin{thm} \label{thm-p-path-quasiopen-no-PI-gen}
Assume that $\supp \mu$ is complete, and that every 
bounded $u \in \Np(X)$ is quasicontinuous.
Then every 
\p-path open set $U\subset X$,  such that $\supp \mu \setm U$ is 
analytic\/ \textup{(}in $\spt\mu$\textup{)}, is quasiopen.
\end{thm}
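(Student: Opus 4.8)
The plan is to obtain Theorem~\ref{thm-p-path-quasiopen-no-PI-gen} from its separable counterpart, Theorem~\ref{thm-p-path-quasiopen-no-PI-separable}, by transferring the whole situation to the subspace $Y:=\spt\mu$. For orientation, note first how Theorem~\ref{thm-p-path-quasiopen-no-PI-separable} itself is proved: one repeats the proof of Theorem~\ref{thm-p-path-quasiopen} essentially verbatim. The exceptional function $\rho\in\Lp(X)$ (satisfying $\int_\ga\rho\,ds=\infty$ for every curve $\ga$ for which $\ga^{-1}(U)$ is not relatively open) may be taken Borel, so the integrand $\rho+\chi_B$ in~\eqref{eq-def-u-from-rho} is Borel; then the measurability of the resulting $u$, which in Theorem~\ref{thm-p-path-quasiopen} was extracted from the doubling and Poincar\'e assumptions via~\cite{JJRRS}, is instead provided by Proposition~\ref{prop-meas-for-anal-Borel} (which applies since $X$ is complete and separable and $U$ is coanalytic), while the quasicontinuity of $u$ is now an outright hypothesis. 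As before $U=\{x:u(x)>0\}$ is thus a superlevel set of a quasicontinuous Newtonian function, hence quasiopen by Proposition~\ref{prop-qcont-char}, and the unbounded case follows by covering $X$ with countably many balls $B$, each $U\cap B$ being again bounded, \p-path open, and coanalytic.

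For the reduction, put $Y:=\spt\mu$. It is complete by assumption and separable by~\cite[Proposition~1.6]{BBbook}. Since $\mu(X\setm Y)=0$, the open set $X\setm Y$ has $\Cp(X\setm Y)=0$: indeed $\chi_{X\setm Y}\in\Np(X)$ with zero norm, because $\mu(X\setm Y)=0$ and $\infty\cdot\chi_{X\setm Y}$ is a Borel upper gradient of $\chi_{X\setm Y}$ --- any nonconstant curve meeting the open set $X\setm Y$ runs inside it on a set of parameters of positive length. The same two observations --- positivity of that length, and $\mu$-nullity of $X\setm Y$ --- show that extending a function $w$ on $Y$ by $0$ on $X\setm Y$, and any upper gradient of $w$ by $\infty$ on $X\setm Y$, produces $\widetilde{w}\in\Np(X)$ with $\|\widetilde{w}\|_{\Np(X)}=\|w\|_{\Np(Y)}$, whereas restricting functions and upper gradients to $Y$ never increases the Newtonian norm. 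Hence $\Cp$ and $\CpY$ agree on subsets of $Y$, and, applying the hypothesis to $\widetilde{w}$ and intersecting the resulting exceptional open sets with $Y$, every bounded $w\in\Np(Y)$ is $\CpY$-quasicontinuous.

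Now the remaining hypotheses of Theorem~\ref{thm-p-path-quasiopen-no-PI-separable} descend to $Y$ and $U\cap Y$: every curve in $Y$ is a curve in $X$ with $\ga^{-1}(U\cap Y)=\ga^{-1}(U)$, and, restricting to $Y$ the function $\rho$ witnessing the zero \p-modulus in $X$ of the exceptional curve family, that family is seen to have zero \p-modulus in $Y$ as well, so $U\cap Y$ is \p-path open in $Y$; and $Y\setm(U\cap Y)=\spt\mu\setm U$ is analytic in $Y$ by assumption, i.e.\ $U\cap Y$ is coanalytic in $Y$. Applying Theorem~\ref{thm-p-path-quasiopen-no-PI-separable} to $(Y,d|_Y,\mu|_Y)$ and the set $U\cap Y$ gives that $U\cap Y$ is quasiopen in $Y$. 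Finally I transfer this to $X$: given $\eps>0$, choose a relatively open $G'\subset Y$ with $\CpY(G')<\eps$ such that $(U\cap Y)\cup G'$ is relatively open in $Y$, write $G'=\widetilde{G}\cap Y$ and $(U\cap Y)\cup G'=\widetilde{\Om}\cap Y$ with $\widetilde{G},\widetilde{\Om}\subset X$ open, and set $\hat G:=\widetilde{G}\cup(X\setm Y)$, which is open in $X$. Using $\widetilde{G}\setm Y\subset X\setm Y$ and the facts from the previous paragraph,
\[
\Cp(\hat G)=\Cp\bigl((\widetilde{G}\cap Y)\cup(X\setm Y)\bigr)\le\Cp(\widetilde{G}\cap Y)+\Cp(X\setm Y)=\CpY(G')<\eps,
\]
and, using $U\setm Y\subset X\setm Y\subset\hat G$,
\[
U\cup\hat G=(U\cap Y)\cup\widetilde{G}\cup(X\setm Y)=\bigl((U\cap Y)\cup G'\bigr)\cup(X\setm Y)=\widetilde{\Om}\cup(X\setm Y)
\]
is open in $X$. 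Thus $U$ is quasiopen.

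The step that needs the most care is the comparison in the second paragraph: one must check precisely that passing between $X$ and its support $\spt\mu$ preserves Newtonian membership, upper gradients, the Sobolev capacity, and quasicontinuity. All of this, though, rests on the single elementary fact that a curve which meets the open set $X\setm\spt\mu$ does so on an interval of parameters, so that curve integrals of functions supported off $\spt\mu$ are automatically infinite; there is no genuine analytic difficulty once Theorem~\ref{thm-p-path-quasiopen-no-PI-separable} is available. A secondary point to watch is that coanalyticity and \p-path openness really do restrict to $U\cap\spt\mu$ inside $\spt\mu$ --- which is precisely why the hypothesis is stated in terms of $\spt\mu\setm U$ rather than $X\setm U$.
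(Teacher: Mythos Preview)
Your proof is correct and follows the same strategy as the paper: pass to $Y=\spt\mu$, verify that the hypotheses of Theorem~\ref{thm-p-path-quasiopen-no-PI-separable} hold there, and transfer the resulting quasiopenness of $U\cap Y$ back to $X$ using $\Cp(X\setm Y)=0$. The only differences are presentational: you derive the auxiliary facts (e.g.\ $\Cp(X\setm Y)=0$, $\Cp=\CpY$ on subsets of $Y$, and quasicontinuity on $Y$) by hand rather than citing Propositions~1.6, 1.53 and Lemma~5.19 in~\cite{BBbook}, and in the final transfer you work directly with Definition~\ref{def-q-open} (producing an open $\hat G$ with $\Cp(\hat G)<\eps$ and $U\cup\hat G$ open), whereas the paper uses the equivalent formulation via an open $G'\supset U$ with $\Cp(G'\setm U)<\eps$.
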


Theorem~\ref{thm-p-path-quasiopen-no-PI-separable}
is a special case of Theorem~\ref{thm-p-path-quasiopen-no-PI-gen},
but we will prove Theorem~\ref{thm-p-path-quasiopen-no-PI-separable}
first and then use it to prove 
Theorem~\ref{thm-p-path-quasiopen-no-PI-gen}.

\begin{remark}
By Theorem~1.1 in
Bj\"orn--Bj\"orn--Shanmugalingam~\cite{BBS5} (or \cite[Theorem~5.29]{BBbook})
and the comments after Proposition~4.7 in 
Bj\"orn--Bj\"orn--Lehrb\"ack~\cite{BBLeh1},
the assumptions in Theorem~\ref{thm-p-path-quasiopen-no-PI-gen} 
hold in particular if $X$ 
(or more generally $\supp \mu$)  is complete and locally
compact, and continuous functions are dense in $\Np(X)$ or equivalently
in $\Np(\supp \mu)$. 
The last equivalence follows from Lemma~5.19 in  \cite{BBbook},
which also implies that
every (bounded) function in $\Np(X)$ is quasicontinuous
if and only if every (bounded) function in $\Np(\supp \mu)$ is 
quasicontinuous.

At this point we would also like to mention that it follows from
the recent results in Ambrosio--Colombo--Di Marino~\cite{AmbCD}
and Ambrosio--Gigli--Savar\'e~\cite{AmbGS} that, if $\supp \mu$ is a complete
doubling metric space and $1<p<\infty$, then
Lipschitz functions are dense in $\Np(X)$.
Thus, the assumptions on $X$ in Theorem~\ref{thm-p-path-quasiopen-no-PI-gen}
hold if $X$ (or $\supp \mu$) is a complete
doubling metric space and $1<p<\infty$.
It is easily verified that such a space is automatically locally compact. 
\end{remark}

\begin{proof}[Proof of Theorem~\ref{thm-p-path-quasiopen-no-PI-separable}]
We follow the proof of Theorem~\ref{thm-p-path-quasiopen}.
Let $\rho\in L^p(X)$ be as therein.
The Vitali--Carath\'eodory theorem (Proposition~7.14 in Folland~\cite{Foll})
provides us with a lower semicontinuous pointwise majorant of $\rho$ which
also belongs to $L^p(X)$.
We can therefore without loss of generality assume that $\rho$ is
lower semicontinuous.

The measurability of the function $u$
in~\eqref{eq-def-u-from-rho} is now 
guaranteed by Proposition~\ref{prop-meas-for-anal-Borel} 
and the assumption that $U$ is coanalytic, 
rather than by the \p-Poincar\'e inequality and Theorem~1.11
in~\cite{JJRRS}.
Also, since quasicontinuity of bounded Newtonian functions is assumed, it need not be
concluded from the \p-Poincar\'e inequality.
The rest of the proof goes through verbatim.
\end{proof}

\begin{proof}[Proof of  Theorem~\ref{thm-p-path-quasiopen-no-PI-gen}]
Let $Y=\supp \mu$.
It follows from Proposition~1.6 in \cite{BBbook} that $Y$ is separable.
Let $U \subset X$ be a \p-path open set 
such that $Y \setm U$ is analytic.
It follows from Proposition~1.53 in \cite{BBbook} that \p-almost
no curve intersects
$X \setm Y$, and thus $U \cap Y$ is also \p-path open.

Hence, as all bounded $u\in\Np(Y)$ are quasicontinuous 
by Lemma~5.19 in~\cite{BBbook},
Theorem~\ref{thm-p-path-quasiopen-no-PI-separable} implies that
$U \cap Y$ is quasiopen in $Y$, i.e.\
for every $\eps >0$ there is a relatively open set $G \subset Y$ such that
$U \cap Y \subset G$ and $\CpY(G \setm U) < \eps$.
Thus 
there exists $v\in\Np(Y)$ such that $v\ge \chi_{G\setm U}$
and $\|v\|_{\Np(Y)}^{p}<\eps$.
Since \p-almost no curve intersects $X\setm Y$
and $\mu(X \setm Y)=0$, extending $v$ by 1 to $X\setm Y$
shows that $\Cp(G \setm U)\le \|v\|_{\Np(X)}^p= \|v\|_{\Np(Y)}^{p}< \eps$.

Let $G' = G \cup (X \setm Y)$, which is open and contains $U$.
By Proposition~1.53 in \cite{BBbook}  we see that
$\Cp(X \setm Y)=0$ and hence
\[
\Cp(G' \setm U) \le \Cp(G \setm U) + \Cp(X \setm Y) = \Cp(G \setm U) < \eps,
\]
showing that 
$U$ is quasiopen in $X$.
\end{proof}

\begin{proof}[Proof of  Theorem~\ref{thm-p-path-quasiopen-no-PI-intro}]
This is a special case of Theorem~\ref{thm-p-path-quasiopen-no-PI-gen}.
\end{proof}

\end{document}